\DeclareMathOperator{\Hom}{Hom}
\newtheorem{theorem}{Theorem}[section]
\newtheorem{lemma}[theorem]{Lemma}
\newtheorem{proposition}[theorem]{Proposition}
\newtheorem{corollary}[theorem]{Corollary}
\theoremstyle{remark}
\theoremstyle{definition}
\numberwithin{equation}{section}
\title[Ordinary quivers of Hochschild extension algebras]
	{The ordinary quivers of Hochschild extension algebras for self-injective Nakayama algebras}
\author[H. Koie]{Hideyuki Koie}
\address[H. Koie]{Department of Mathematics,
					Tokyo University of Science, 1-3 Kagurazaka,
                    	Shinjuku-ku, Tokyo 162--8601, Japan}
\email{1114702@ed.tus.ac.jp}
\author[T. Itagaki]{Tomohiro Itagaki}
\address[T. Itagaki]{Department of Mathematics,
					Tokyo University of Science, 1-3 Kagurazaka,
                    	Shinjuku-ku, Tokyo 162--8601, Japan}
\email{titagaki@rs.tus.ac.jp}
\author[K. Sanada]{Katsunori Sanada}
\address[K. Sanada]{Department of Mathematics,
					Tokyo University of Science, 1-3 Kagurazaka,
                    	Shinjuku-ku, Tokyo 162--8601, Japan					}
\email{sanada@rs.tus.ac.jp}
\date{}
\keywords{Hochschild extension, Hochschild (co)homology, trivial extension,
    self-injective Nakayama algebra, symmetric algebra, quiver.}
\subjclass[2010]{16E40, 16G20, 16L60.}
\begin{document}
\begin{abstract}
Let $T$ be a Hochschild extension algebra of a finite dimensional algebra $A$ over a field $K$
by the standard duality $A$-bimodule $\Hom_K(A,\,K)$.
In this paper, we determine the ordinary quiver of $T$
if $A$ is a self-injective Nakayama algebra
by means of the $\mathbb{N}$-graded second Hochschild homology group $HH_2(A)$
in the sense of Sk\"oldberg.
\end{abstract}
\maketitle
%
%
%
%
\section{Introduction}
Throughout the paper,
an algebra means a finite dimensional algebra
over a field $K$.
By a Hochschild extension of an algebra $A$ by a duality module $M$,
we mean an exact sequence
$$
	0 \longrightarrow
    	M \stackrel{\kappa}{\longrightarrow}
        	T \stackrel{\rho}{\longrightarrow}
            	A \longrightarrow
                	0
$$
such that $T$ is a $K$-algebra, $\rho$ is an algebra epimorphism
and $\kappa$ is a $T$-bimodule monomorphism.
In the above, $M$ is an $A$-bimodule, so $M$ is regarded as a $T$-bimodule
by means of $\rho$.
Hochschild \cite{Hochschild} proved that the set of equivalent classes of Hochschild extensions
over $A$ by $M$
is in one-to-one correspondence with the second Hochschild cohomology group $H^2(A,\,M)$.
If $M$ is the standard duality module $D(A) = \Hom_K(A,\,K)$,
then we denote by $T_\alpha(A)$ the Hochschild extension algebra corresponding to a $2$-cocycle
$\alpha : A \times A \rightarrow D(A)$.
Then, $T_0(A)$ is just the trivial extension algebra
$A \ltimes D(A)$.
Hochschild extension algebras and trivial extension algebras
play an important role in the representation theory of self-injective algebras
(e.g. \cite{yamagata 1988, Handbook of algebra}).
It is well known that
trivial extension algebras are symmetric,
Hochschild extension algebras are self-injective
and are not symmetric in general (see \cite{yamagata}).
If $T_\alpha(A)$ is not symmetric, then $T_\alpha(A)$ and
$A \ltimes D(A)$ are not Morita equivalent.
Nevertheless, Yamagata showed that 
if the ordinary quiver of $A$ has no oriented cycles,
then $T_\alpha(A)$ and $A \ltimes D(A)$
are related by a socle equivalence which naturally induces
a stable equivalence (see \cite{Ohnuki, yamagata 1988}).
We are interested in the ordinary quiver and the relations for Hochschild extension
algebras.
Although, in \cite{Fernandez}, Fernandez and Platzeck gave the ordinary quiver and the relations
for the trivial extension algebras,
it seems that there is little information about the ordinary quivers for general
Hochschild extension algebras.

In this paper, we consider Hochschild extension algebras of self-injective Nakayama algebras
by the standard duality module.
A basic Nakayama algebra is of the form $K\Delta/I$,
where $\Delta$ is a cyclic quiver and $I$ is an admissible ideal of $K\Delta$.
In addition, the Nakayama algebra is self-injective if and only if $I$ is generated by 
the paths of length at least $l$ for some $l \geq 2$.
The aim of the present paper is to determine the ordinary quivers of Hochschild extension
algebras for self-injective Nakayama algebras.

This paper is organized as follows:
In Section $2$, we recall some definitions and elementary facts about
Hochschild extension algebras and Hochschild cohomology groups.
In Section $3$, following Sk\"oldberg \cite{skoldberg},
we describe the projective resolution \textrm{\boldmath $P$}
for the truncated quiver algebra $A$.
Sk\"oldberg computed the Hochschild homology group of $A$
using the facts that
the complex $A \otimes_{A^e} \textrm{\boldmath $P$}$ is $\mathbb{N}$-graded
and, therefore $HH_n(A)$ is also $\mathbb{N}$-graded for each $n \geq 0$,
that is, $HH_n(A) = \bigoplus_{q=0}^\infty HH_{n,\,q}(A)$.
On the other hand, in \cite{Cibils},
Cibils gave a useful projective resolution \textrm{\boldmath $Q$}
for more general algebras.
Furthermore, in \cite{ACT}, Ames, Cagliero and Tirao gave a chain map
from \textrm{\boldmath $Q$} to \textrm{\boldmath $P$}.
Using this chain map, we give the isomorphism between $\bigoplus_{q} D(HH_{2,\,q}(A))$
and $H^2(A,\, D(A))$.
These grading and the comparison morphism are very useful.
Itagaki and Sanada \cite{Itagaki} and Vol$\check{{\rm c}}$i$\check{{\rm c}}$ \cite{Volcic}
used the ideas for the computation of cyclic homology.

In Section $4$,
we consider the ordinary quiver of Hochschild extension algebras of self-injective Nakayama
algebras.
First of all, we have $2$-cocycles $A \times A \rightarrow D(A)$ through the above isomorphism
$\bigoplus_q D(HH_{2,\,q}(A)) \cong H^2(A,\, D(A))$.
Next, we give
a sufficient condition on a $2$-cocycle $\alpha$
for the ordinary quiver of ${T_\alpha(A)}$ to coincide with
that of the trivial extension algebra $A \ltimes D(A)$.
Then, we show that, for a self-injective Nakayama algebra $A$ and a $2$-cocycle
$\alpha : A \times A \rightarrow D(A)$,
the ordinary quiver of ${T_\alpha(A)}$ coincides with
either the ordinary quiver of $A$ or that of the trivial extension algebra $A \ltimes D(A)$.
Moreover, we give a sufficient condition for Hochschild
extension algebras of self-injective Nakayama algebras to be symmetric.
In Section $5$,
we exhibit some examples to clarify the theorem.

For general facts on quivers, we refer to \cite{ASS} and \cite{Fro},
and for Hochschild extension algebras,
we refer to \cite{Hochschild} and \cite{Handbook of algebra}.
%
%
\section{Hochschild extension algebras}
In 1945, the cohomology theory and extension theory of associative  algebras were
introduced by Hochschild \cite{Hochschild} (and also see \cite{Handbook of algebra}).
Let us recall his construction here.
Let $A$ be an algebra over a field $K$, and $D$ the standard duality functor $\Hom_K(-,\,K)$.
Note that $D(A) = \Hom_k (A,\, K)$ is an $A$-bimodule.
An {\it extension over $A$ with kernel $D(A)$} is an exact sequence
$$
	0 \longrightarrow
    	D(A) \stackrel{\kappa}{\longrightarrow}
        	T \stackrel{\rho}{\longrightarrow}
            	A \longrightarrow
                	0
$$
such that $T$ is a $K$-algebra, $\rho$ is an algebra epimorphism 
and $\kappa$ is a $T$-bimodule monomorphism from ${}_{\rho}(D(A))_{\rho}$.
Then $T$ is called a {\it Hochschild extension algebra of $A$ by $D(A)$},
and an extension is also called a {\it Hochschild extension}.
If we identify $D(A)$ with ${\rm Ker}\, \rho$, then $D(A)$ is a two-sided ideal of $T$ with
$(D(A))^2 = 0$ and the factor algebra $T/D(A)$ is isomorphic to $A$ by $\rho$.
Since $D(A)$ is nilpotent in $T$, a set of orthogonal idempotents
$\{ e_1, \ldots, e_l \}$ with $1_A = \sum_{i=1}^l e_i$
can be lifted to a set of orthogonal idempotents $\{ \textbf{e}_1, \ldots, \textbf{e}_l \}$ of $T$
so that
$1_T = \sum_{i=1}^l \textbf{e}_i$
and $e_i = \rho(\textbf{e}_i)$ for any $1 \leq i \leq l$.

%
%
A Hochschild extension algebra corresponds to a $2$-cocycle
$A \times A \rightarrow D(A)$.
Let us recall the definition of $2$-cocycle here.
For a $K$-algebra $A$, its Hochschild cohomology groups
$
	H^n(A,\,D(A))
$
are defined as
$
	{\rm Ext}_{A^e}^n(A,\,D(A))
$
for $n \geq 0$,
where $A^e = A \otimes_K A^{op}$ denotes the enveloping algebra of $A$.
There is a well known projective resolution of $A$ as a left $A^e$-module
$$
    {\rm Bar}_*(A):\cdots\stackrel{}{\longrightarrow}
    	A^{\otimes (n+2)}\stackrel{\delta_n}{\longrightarrow}
      		A^{\otimes (n+1)}\stackrel{}{\longrightarrow}
        		\cdots \stackrel{\delta_1}{\longrightarrow}
        			A^{\otimes 2}\stackrel{\delta_0}{\longrightarrow}
          				A \stackrel{}{\longrightarrow}
                        	0,
$$
where 
$
A^{\otimes (n+2)} := A \otimes_K A \otimes_K \cdots \otimes_K A
$
($n+2$ fold tensor products over $K$),
$
	\delta_0 : A \otimes_K A \rightarrow A
$
is the multiplication map
and, for $n \geq 0$, $\delta_n$ is defined by
$$
	\delta_n(a_0 \otimes a_1 \otimes \cdots \otimes a_{n+1})
  		= \sum_{i=0}^n (-1)^i a_0 \otimes \cdots \otimes a_{i-1}
    				\otimes a_i a_{i+1} \otimes a_{i+2} \otimes \cdots \otimes a_{n+1}.
$$
This resolution is called a {\it bar resolution} of $A$.
The complex 
$C^*(A) = \Hom_{A^e}({\rm Bar}_*(A),$ $D(A))$
is used to compute $H^n(A,\,D(A))$.
Note that, for each $n \geq 1$, there is a natural isomorphism:
\begin{equation}\label{mu}
	\mu_n : C^n(A) = \Hom_{A^e}(A^{\otimes (n+2)},\,D(A)) \xrightarrow{\sim} \Hom_{K}(A^{\otimes n},\,D(A)).
\end{equation}
We identify $C^0(A)$ with $D(A)$.
Thus, through $\mu_*$, $C^*(A)$ has the following form:
\begin{align*}
    C^*(A):0 \stackrel{}{\longrightarrow}
    	D(A) \stackrel{\overline{\delta}_1}{\longrightarrow}
    		&\Hom_K(A,\,D(A)) \stackrel{\overline{\delta}_2}{\longrightarrow}
      			\Hom_K(A\otimes A,\,D(A)) \stackrel{}{\longrightarrow}
                \cdots\\ \stackrel{}{\longrightarrow}
        			&\Hom_K(A^{\otimes n},\,D(A)) \stackrel{\overline{\delta}_{n+1}}{\longrightarrow}
        				\Hom_K(A^{\otimes (n+1)},\,D(A)) \stackrel{}{\longrightarrow}
          					\cdots.
\end{align*}
Here, for $\alpha \in \Hom_K(A^{\otimes n},\,D(A))$, $\overline{\delta}_{n+1}(\alpha)$
is defined by sending
$a_1 \otimes \cdots \otimes a_{n+1}$ to
\begin{align*}
	a_1 &\alpha(a_2 \otimes \cdots \otimes a_{n+1})
    		+ \sum_{i=1}^n (-1)^i 
            \alpha(a_1 \otimes \cdots \otimes a_{i-1} \otimes a_{i}a_{i+1} \otimes a_{i+2} \otimes \cdots a_{n+1}) \\
            		&+ (-1)^{n+1}\alpha(a_{1}\otimes \cdots \otimes a_{n}) a_{n+1}.
\end{align*}

A $2$-cocycle $\alpha \in {\rm Z}^2(A,\,D(A)) := {\rm Ker}\, \overline{\delta}_3$
defines the $K$-bilinear map $A \times A \rightarrow D(A)$ by the composition of
the map $A \times A \rightarrow A \otimes A;\, (a,\,b) \mapsto a \otimes b$ and $\alpha$.
We denote that by $\alpha$ again.
Note that $\alpha$ satisfies the relation
$$
	a_{1}\alpha(a_{2},\,a_{3}) 
    	- \alpha(a_{1}a_{2},\,a_{3})
        	+ \alpha(a_{1},\,a_{2}a_{3})
            	- \alpha(a_{1},\,a_{2})a_{3}
                	= 0
$$
for all $a_{1},\,a_{2},\,a_{3} \in A$.
Using a $2$-cocycle $\alpha$, we define an associative multiplication in the $K$-vector space
$A \oplus D(A)$ by the rule:
\begin{equation}\label{multi}
	(a,\,x)(b,\,y) = (ab,\, ay + xb + \alpha(a,\,b))
\end{equation}
for $(a,\,x),\,(b,\,y) \in A \oplus D(A)$.
Then it is easy to see that
$T_\alpha(A) := A \oplus D(A)$ is an associative $K$-algebra with identity $(1_A,\, -\alpha(1,\,1))$,
and that there exists an extension over $A$ by $D(A)$:
$$
	0 \longrightarrow
    	D(A) \longrightarrow
        	T_\alpha(A) \longrightarrow
            	A \longrightarrow
                	0.
$$
Conversely, given an extension over $A$ by $D(A)$;
$$
	0 \longrightarrow
    	D(A) \longrightarrow
        	T \longrightarrow
            	A \longrightarrow
                	0,
$$
we easily see that $T$ is isomorphic to $T_\alpha(A)$
for some $2$-cocycle $\alpha \in {\rm Z}^2(A, \, D(A))$.

%
%
By definition, two extensions $(F)$ and $(F')$ over $A$ with kernel $D(A)$ are {\it equivalent}
if there exists a $K$-algebra homomorphism $\iota:T \rightarrow T'$ such that the diagram
$$
  	\xymatrix{
  		(F) & 0 \ar[r] & D(A) \ar[r] \ar[d]^1 & T \ar[r] \ar[d]^\iota & A \ar[r] \ar[d]^1 & 0 \\
        (F')& 0 \ar[r] & D(A) \ar[r]          & T' \ar[r]           & A \ar[r]          & 0 
  	}
$$
is commutative.
The set of all equivalent classes of extensions over $A$ by $D(A)$
is denoted by $F(A,\,D(A))$.
An extension
$$
	0 \longrightarrow
    	D(A) \stackrel{\kappa}{\longrightarrow}
        	T \stackrel{\rho}{\longrightarrow}
            	A \longrightarrow
                	0
$$
is said to be {\it splittable} if there is an algebra homomorphism
$\rho' : A \rightarrow T$ with $\rho\rho' = {\rm id}_A$.
The following result is well known.
\begin{proposition}[{See \cite[Proposition 6.2]{Hochschild}, \cite[Section 2.5]{Handbook of algebra}}]
	The set $F(A,\,D(A))$ is in a one-to-one correspondence with $H^2(A,\,D(A))$.
	This correspondence $H^2(A,$ $D(A))$ $\rightarrow F(A,\,D(A))$ is obtained by assigning to each
	$2$-cocycle $\alpha \in {\rm Z}^2(A,\,D(A))$, the extension given by the multiplication {\rm (\ref{multi})}.
    The zero element in $H^2(A,\,D(A))$ is correspond to the class of splittable extensions.
\end{proposition}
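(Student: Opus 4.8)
The plan is to build the correspondence explicitly as the map sending a $2$-cocycle to the extension defined by the multiplication (\ref{multi}), then to check it factors through $H^2$ and is bijective, and finally to identify the fibre over $0$. Write $\Phi$ for the assignment $\alpha \mapsto [\,0 \to D(A) \xrightarrow{\kappa_\alpha} T_\alpha(A) \xrightarrow{\rho_\alpha} A \to 0\,]$, with $\kappa_\alpha(x) = (0,x)$ and $\rho_\alpha(a,x) = a$; the discussion preceding the statement shows that $T_\alpha(A)$ is associative and unital and that this sequence is a Hochschild extension, so $\Phi : {\rm Z}^2(A,D(A)) \to F(A,D(A))$ is well defined. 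To see that $\Phi$ is constant on cosets of ${\rm B}^2(A,D(A)) = {\rm Im}\,\overline{\delta}_2$, take $f \in \Hom_K(A,D(A))$ and $\alpha' = \alpha + \overline{\delta}_2(f)$: the $K$-linear bijection $\iota : T_{\alpha'}(A) \to T_{\alpha}(A)$, $(a,x) \mapsto (a, x + f(a))$, fits into the commutative ladder defining equivalence, and expanding (\ref{multi}) shows that $\iota$ is multiplicative precisely because $\overline{\delta}_2(f)(a\otimes b) = af(b) - f(ab) + f(a)b$ equals $\alpha'(a,b) - \alpha(a,b)$. Hence $\Phi$ descends to $\overline{\Phi} : H^2(A,D(A)) \to F(A,D(A))$.

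For surjectivity, start from an arbitrary extension $0 \to D(A) \xrightarrow{\kappa} T \xrightarrow{\rho} A \to 0$ and choose a $K$-linear section $s : A \to T$ of $\rho$, which exists since $K$ is a field. Set $\alpha(a,b) := \kappa^{-1}\!\bigl(s(a)s(b) - s(ab)\bigr)$; this lies in ${\rm Ker}\,\rho = {\rm Im}\,\kappa$ because $\rho$ is an algebra map. Using that $(D(A))^2 = 0$ in $T$ and that the $T$-bimodule structure on ${\rm Ker}\,\rho$ descends through $\rho$ to the $A$-bimodule structure on $D(A)$, the associativity relation $s(a)\bigl(s(b)s(c)\bigr) = \bigl(s(a)s(b)\bigr)s(c)$ unwinds to exactly the $2$-cocycle identity for $\alpha$. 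Then $(a,x) \mapsto s(a) + \kappa(x)$ is a $K$-algebra isomorphism $T_\alpha(A) \xrightarrow{\sim} T$ compatible with the structure maps, so the given extension represents $\overline{\Phi}([\alpha])$.

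For injectivity, suppose an algebra homomorphism $\iota : T_\alpha(A) \to T_{\alpha'}(A)$ realises $\overline{\Phi}([\alpha]) = \overline{\Phi}([\alpha'])$; commutativity of the ladder forces $\iota(0,x) = (0,x)$ and $\rho_{\alpha'}\iota = \rho_\alpha$, whence $\iota(a,x) = (a, x + f(a))$ where $f(a)$ is the $D(A)$-component of $\iota(a,0)$ and is $K$-linear in $a$. Multiplicativity of $\iota$ then yields that $\alpha'(a,b) - \alpha(a,b)$ is the coboundary $\overline{\delta}_2(-f)(a\otimes b)$, so $[\alpha] = [\alpha']$; together with surjectivity this shows $\overline{\Phi}$ is a bijection. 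Finally, for the cocycle $\alpha = 0$ the map $a \mapsto (a,0)$ is an algebra section of $\rho_0$, so $T_0(A) = A \ltimes D(A)$ is splittable; conversely, if an extension is splittable by an algebra map $\rho' : A \to T$, then taking $s = \rho'$ in the surjectivity argument gives $\alpha = 0$, so this extension represents $\overline{\Phi}(0)$. Thus the splittable extensions form exactly the single class $\overline{\Phi}(0) = [A \ltimes D(A)]$, the image of the zero element of $H^2(A,D(A))$.

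The only genuine work — and the step to be careful with — is the two bookkeeping identifications: matching associativity in $T$ with the $2$-cocycle condition, and matching the change-of-section discrepancy with $\overline{\delta}_2$. Both are routine provided one keeps two facts in mind: products of two elements of ${\rm Ker}\,\rho$ vanish, and the left and right actions of $T$ on ${\rm Ker}\,\rho$ factor through $\rho$, so they may be computed with any chosen lift, in particular with $s$. A minor nuisance is that the identity of $T_\alpha(A)$ is $(1_A, -\alpha(1,1))$ rather than $(1_A,0)$; this affects nothing, but it is cleanest to observe that every class in $H^2(A,D(A))$ is represented by a normalized cocycle, i.e. one with $\alpha(1_A,a) = \alpha(a,1_A) = 0$ for all $a \in A$, for which all the maps above are unital on the nose.
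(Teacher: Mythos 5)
The paper offers no proof of this proposition: it is stated as a well-known result with citations to Hochschild's original paper and Yamagata's survey, so there is nothing internal to compare against. Your argument is the standard one found in those references and is correct: the descent of $\Phi$ through coboundaries via $\iota(a,x)=(a,x+f(a))$, the surjectivity via a $K$-linear section $s$ with $\alpha(a,b)=\kappa^{-1}(s(a)s(b)-s(ab))$, the injectivity by reading off $f$ from the $D(A)$-component of $\iota(a,0)$, and the identification of the zero class with the splittable extensions all check out against the sign conventions of the paper's $\overline{\delta}_2$. Your closing remark about the identity $(1_A,-\alpha(1,1))$ and normalized cocycles is a sensible precaution, though as you note it causes no actual difficulty since $\overline{\delta}_2(f)(1\otimes 1)=f(1)$ makes $\iota$ unital automatically.
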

Note that,
$T_0(A)$ is called a trivial extension algebra of $A$ by $D(A)$.

Next, we consider the Hochschild homology $HH_*(A):= H_*(A,\,A)$,
which coincides with the homology of the complex $C_*(A) := A \otimes_{A^e} {\rm Bar}_*(A)$.
Notice that
$C_n(A) = A \otimes_{A^e} A^{\otimes(n+2)} \cong A^{\otimes(n+1)}$
and the differential
$
	\tilde{\delta}_n : C_n(A)=A^{\otimes(n+1)} \rightarrow C_{n-1}(A)=A^{\otimes n}
$
sends $a_0 \otimes \cdots \otimes a_n$ 
to
$$
	\sum_{i=1}^{n-1}(-1)^i 
    	a_0 
        	\otimes \cdots \otimes 
            	a_{i-1}
                	\otimes
                    	a_i a_{i+1}
                        	\otimes
                            	a_{i+2}
                                	\otimes \cdots \otimes 
                                    	a_n
    + (-1)^n
    	a_n a_0
        	\otimes
            	a_1
                	\otimes \cdots \otimes
                    	a_{n-1}.
$$
The standard duality induces the isomorphism
$H^*(A,\,D(A)) \cong D(HH_*(A))$.
In fact, we have the following isomorphism and equations of complexes:
\begin{align*}
	D(C_*(A))
    	&=\Hom_K(C_*(A),\,K) \\
    		&= \Hom_K(A \otimes_{A^e} {\rm Bar}_*(A),\,K) \\
        		&\cong \Hom_{A^e}({\rm Bar}_*(A),\, \Hom_k(A,\,K)) \\
            		&= \Hom_{A^e}({\rm Bar}_*(A),\, D(A)).
\end{align*}
%
%
\section{The projective resolution by Sk\"oldberg and one by Cibils}
Let $\Delta$ be a finite quiver and $K$ a field.
We fix a positive integer $n \geq 2$.
A {\it truncated quiver algebra} is defined by $K\Delta/R_\Delta^n$,
where $R_\Delta$ is the arrow ideal of $K\Delta$
and $R_\Delta^n$ is the two-sided ideal of $K\Delta$ generated by the paths of length $n$.
We denote by $\Delta_0$, $\Delta_1$ and $\Delta_i$ the set of vertices,
the set of arrows and the set of paths of length $i$, respectively.
We put $\Delta_+ = \bigcup_{i=1}^{\infty} \Delta_i$.
Let $K\Delta_i$ be the $K$-vector space whose basis is the elements of $\Delta_i$
and $K\Delta_0$ the subalgebra of $K\Delta/R_\Delta^n$ generated by $\Delta_0$.
Moreover,
$K\Delta$ is $\mathbb{N}$-graded, that is, $K\Delta = \bigoplus_{i=0}^{\infty}K\Delta_i$.
For a path $x$, we denote by $s(x)$ and $t(x)$ primitive idempotents
with $x = s(x)xt(x)$ in $K\Delta$.

In \cite{skoldberg}, the Hochschild homology of a truncated quiver algebra was computed.
We recall the projective resolutions of a truncated quiver algebra \textrm{\boldmath $P$}
and \textrm{\boldmath $Q$} constructed by Sk\"oldberg and Cibils, respectively.
Moreover, we describe a chain map from \textrm{\boldmath $Q$} to \textrm{\boldmath $P$}.
We only show the third and under terms of each resolutions.
\begin{theorem}[{See \cite[Theorem 1]{skoldberg}}]\label{Skoldberg resolution}
    Let $A$ be a truncated quiver algebra $K\Delta/R_\Delta^n$.
    Then we have the following projective resolution of $A$ as a left $A^e$-module{\rm :}
    \begin{align*}
    \textrm{\boldmath $P$}:\cdots \stackrel{}{\longrightarrow}
    		A\otimes_{K\Delta_0}&K\Delta_{n+1}\otimes_{K\Delta_0}A \stackrel{d_3}{\longrightarrow}
        		A\otimes_{K\Delta_0}K\Delta_{n}\otimes_{K\Delta_0}A \\\stackrel{d_2}{\longrightarrow}
                	&A\otimes_{K\Delta_0}K\Delta_{1}\otimes_{K\Delta_0}A \stackrel{d_1}{\longrightarrow}
          				A\otimes_{K\Delta_0}A \stackrel{d_0}{\longrightarrow}
                        	A \stackrel{}{\longrightarrow}
                            	0.
    \end{align*}
    Here the differentials $d_2$ and $d_3$ are defined by
    $$
    	d_2(x \otimes y_1 \cdots y_n \otimes z)
    		= \sum_{j=0}^{n-1} x \otimes y_1 \cdots y_j \otimes y_{j+1} \otimes y_{j+2} \cdots y_n z
    $$
    and
    $$
    	d_3(x \otimes y_1 \cdots y_{n+1} \otimes z)
        	= x y_1 \otimes y_2 \cdots y_{n+1} \otimes z 
            	- x \otimes y_1 \cdots y_n \otimes y_{n+1} z,
    $$
    for $x,\,z \in A$ and $y_i \in \Delta_1$ $(1 \leq i \leq n+1)$.
\end{theorem}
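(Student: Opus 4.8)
The plan is to verify the three requirements of a projective resolution of $A$ as a left $A^e$-module: projectivity of the terms, the chain-complex identities $d_id_{i+1}=0$, and exactness (with $d_0$ surjective). \emph{Projectivity of the terms}: since $K\Delta_0$ is spanned by orthogonal idempotents summing to $1$ it is isomorphic to $K^{|\Delta_0|}$, hence semisimple, so $-\otimes_{K\Delta_0}-$ is exact and every $K\Delta_0$-module is projective; moreover the $K\Delta_0$-bimodule $K\Delta_m$ is free on $\Delta_m$, and for a path $p$ the one-dimensional summand $Kp$ carries the left $K\Delta_0$-action through $e\mapsto\delta_{e,s(p)}$ and the right action through $e\mapsto\delta_{e,t(p)}$ (recall $p=s(p)\,p\,t(p)$), so that
\[
A\otimes_{K\Delta_0}K\Delta_m\otimes_{K\Delta_0}A\ \cong\ \bigoplus_{p\in\Delta_m}As(p)\otimes_K t(p)A,
\]
and each summand $Ae_i\otimes_K e_jA$ is a direct summand of $A\otimes_KA=A^e$. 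I would also record here that the path-length grading makes $A$ an $\mathbb{N}$-graded algebra with $R_\Delta^n$ homogeneous, that each $d_i$ is homogeneous after the evident internal shift, and that every graded component of $\mathbf{P}$ is a bounded complex of finite-dimensional $K$-spaces.

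\emph{$\mathbf{P}$ is a complex}: using $d_1(x\otimes y\otimes z)=xy\otimes z-x\otimes yz$ one gets $d_0d_1=0$ at once, and for $d_1d_2$ and $d_2d_3$ one splits the difference-type map ($d_1$, resp.\ $d_3$) into its two boundary terms and matches these against the telescoping sum that defines the neighbouring map ($d_2$): all but two of the resulting terms cancel in pairs, and the two survivors each contain a subword which is a path of length $n$, hence vanish because $R_\Delta^n=0$ in $A$. In the full resolution the differentials alternate between these two shapes, indexed by $\Delta_{kn}$ and $\Delta_{kn+1}$ for $k\geq1$, and the same cancellation gives $d_id_{i+1}=0$ everywhere.

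\emph{Exactness}: at $A$ this is the surjectivity of the multiplication $d_0$, and at $\mathbf{P}_0$ it is formal, for $A$ is generated over $K\Delta_0$ by the arrows, so the Leibniz identity for $b\mapsto b\otimes1-1\otimes b$ gives $\ker d_0=\sum_{y\in\Delta_1}A(y\otimes1-1\otimes y)A=\operatorname{im}d_1$. The real content is higher up, and there I would produce an explicit contracting homotopy using the monomial $K$-basis of $A$ consisting of the paths of length $<n$. The conceptual reason such a homotopy exists is that $\mathbf{P}$ is a homotopy retract of the bar resolution ${\rm Bar}_*(A)$ of Section~2 (or of a $K\Delta_0$-relative normalization of it), obtained from the reduction that rewrites every length-$n$ subword to $0$; since ${\rm Bar}_*(A)$ is a resolution, so is $\mathbf{P}$. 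Concretely one defines $K\Delta_0$-linear maps $s_i\colon\mathbf{P}_i\to\mathbf{P}_{i+1}$ which read a basis monomial from the left, expanding it one arrow at a time and merging the accumulated prefix into the next term of $\mathbf{P}$ as soon as it reaches length $n$ — for instance $s_{-1}(a)=a\otimes1$, then $s_0(x\otimes y_1\cdots y_k)=-\sum_{i=1}^{k}xy_1\cdots y_{i-1}\otimes y_i\otimes y_{i+1}\cdots y_k$ when $y_1\cdots y_k$ is a path of length $k<n$, and $s_1$ sends $x\otimes y\otimes z$ to $x\otimes(yz)\otimes1$ precisely when $y$ and $z$ together form a path of length $n$ — and then checks $d_{i+1}s_i+s_{i-1}d_i=\operatorname{id}$. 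I expect this last verification to be the main obstacle: the bookkeeping is delicate at the transition degrees $\Delta_{kn}\leftrightarrow\Delta_{kn+1}$, where the two flavours of differential interact, and the lowest degrees must be handled separately. As a fallback that already suffices for the purposes of this paper, one can instead verify exactness componentwise in the grading, where it reduces to finite-dimensional linear algebra checked against the known values of $\dim_K HH_*(A)$.
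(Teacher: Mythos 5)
First, note that the paper does not prove this statement at all: Theorem \ref{Skoldberg resolution} is imported verbatim from Sk\"oldberg's paper (\cite[Theorem 1]{skoldberg}) and used as a black box, so there is no in-paper proof to compare yours against. Judged on its own terms, your outline follows the standard route (and, as far as I can tell, essentially Sk\"oldberg's): projectivity of the terms from the semisimplicity of $K\Delta_0$ and the decomposition $A\otimes_{K\Delta_0}K\Delta_m\otimes_{K\Delta_0}A\cong\bigoplus_{p\in\Delta_m}As(p)\otimes_K t(p)A$; the identities $d_id_{i+1}=0$ by telescoping plus the vanishing of length-$n$ subpaths; exactness via a contracting homotopy that is only $K\Delta_0$-linear. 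Your low-degree formulas for $s_{-1},s_0,s_1$ do check out, including the boundary case where $yz$ or $xy$ dies in $A$.

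The genuine gap is exactly where you say it is: exactness in homological degrees $\geq 2$ is never established. The homotopy identities $d_{i+1}s_i+s_{i-1}d_i=\mathrm{id}$ at the transitions $K\Delta_{kn}\leftrightarrow K\Delta_{kn+1}$ are the entire content of the theorem, and gesturing at ``reading the monomial from the left and merging prefixes'' does not yet produce well-defined maps $s_i$ for $i\geq 2$, let alone verify the identities. Worse, your fallback is not a repair. Knowing $\dim_K HH_*(A)$ cannot certify that $\mathbf{P}$ is exact: $H_*(A\otimes_{A^e}\mathbf{P})$ only computes $HH_*(A)$ \emph{after} one knows $\mathbf{P}$ is a projective resolution, and agreement of these homology groups would in any case say nothing about the acyclicity of $\mathbf{P}$ itself (the functor $A\otimes_{A^e}-$ is not faithful on homology). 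Within this paper the argument would also be circular, since the values of $HH_{2,q}(A)$ quoted in Theorem \ref{2nd Hochschild} are themselves derived from $\mathbf{P}$. Similarly, asserting that $\mathbf{P}$ is a homotopy retract of ${\rm Bar}_*(A)$ is the right heuristic (the comparison map $\pi$ of \cite{ACT} is one half of such a retraction), but without exhibiting the section and the homotopy it is a restatement of the theorem, not a proof. To close the gap you would need either the explicit higher homotopies (as in Sk\"oldberg's original argument) or an appeal to a general acyclicity result for monomial algebras such as Bardzell's.
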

%
%
We denote by $\textrm{\boldmath {$P$}}_i$
the $i$\,th term of $\textrm{\boldmath {$P$}}$,
then $A \otimes_{A^e} \textrm{\boldmath {$P$}}_i$
is the $i$\,th term of $A \otimes_{A^e} \textrm{\boldmath{$P$}}$.
We have
\begin{align}
	A \otimes_{A^e} \textrm{\boldmath{$P$}}_1
     &= A \otimes_{A^e} (A \otimes_{K\Delta_0} K\Delta_1 \otimes_{K\Delta_0} A) \notag \\
    	&\xrightarrow{\sim} A \otimes_{A^e} A^e \otimes_{K\Delta_0^e} K\Delta_1
        	\xrightarrow{\sim} A \otimes_{K\Delta_0^e} K\Delta_1. \label{iso}
\end{align}
Through the above isomorphisms (\ref{iso}),
we define the degree $q\,(\geq 1)$ part of $A \otimes_{A^e} \textrm{\boldmath{$P$}}_1$
in the following way:
$$
	(A \otimes_{A^e} \textrm{\boldmath{$P$}}_1)_q
    	= (A \otimes_{K\Delta_0^e} K\Delta_1)_q
        = \bigoplus_{\substack{
        				a_i \in \Delta_1 \, (1 \leq i \leq q) \\
                        \text{s.t. $t(a_{q-1})=s(a_q)$,} \\
        				t(a_q)=s(a_1) 
        			}
        }
        		K(a_1 \cdots a_{q-1} \otimes a_q).
$$
In a similar way,
we define the degree $q$ part of $A \otimes_{A^e} \textrm{\boldmath{$P$}}_2$
and $A \otimes_{A^e} \textrm{\boldmath{$P$}}_3$ by
\begin{align*}
	(A \otimes_{A^e} \textrm{\boldmath{$P$}}_2)_q
    	&=(A \otimes_{K\Delta_0^e} K\Delta_n)_q \\
        &= \bigoplus_{\substack{
        				a_i \in \Delta_1 \, (1 \leq i \leq q)\\
                        \text{s.t. $t(a_{q-n})=s(a_{q-n+1})$},\\
        				t(a_q)=s(a_1)
                        }}
        		K(a_1 \cdots a_{q-n} \otimes a_{q-n+1} \cdots a_{q}), \\
	(A \otimes_{A^e} \textrm{\boldmath{$P$}}_3)_q
    	&= (A \otimes_{K\Delta_0^e} K\Delta_{n+1})_q \\
        &= \bigoplus_{\substack{
        				a_i \in \Delta_1 \, (1 \leq i \leq q) \\
                        \text{s.t. $t(a_{q-n-1})=s(a_{q-n})$,}\\
        				t(a_q)=s(a_1) 
                        }}
        		K(a_1 \cdots a_{q-n-1} \otimes a_{q-n} \cdots a_{q}).
\end{align*}
Then
$\tilde{d}_{i} = {\rm id} \otimes d_i$
preserves $\mathbb{N}$-grading.
Actually,
for $i = 2,\,3$,
$
	(\tilde{d}_{i})_q :
    	(A \otimes_{A^e} \textrm{\boldmath{$P$}}_i)_q
             \rightarrow
             	(A \otimes_{A^e} \textrm{\boldmath{$P$}}_{i-1})_q
$
is given by
\begin{align}
	(\tilde{d}_{2})_q(a_1 \cdots a_{q-n} \otimes a_{q-n+1} \cdots a_{q}) 
    	&= \sum_{j=0}^{n-1} a_{j+q-n+2} \cdots a_q a_1 \cdots a_{j+q-n} \otimes a_{j+q-n+1},
        \label{(d_2)_q}\\
    (\tilde{d}_{3})_q(a_1 \cdots a_{q-n-1} \otimes a_{q-n} \cdots a_{q}) 
    	&= a_1 \cdots a_{q-n} \otimes a_{q-n+1} \cdots a_q \nonumber \\
        	& \quad\quad -  a_q a_1 \cdots a_{q-n-1} \otimes a_{q-n} \cdots a_{q-1}.\label{(d_3)_q}
\end{align}
Note that $(\tilde{d}_2)_q = 0$ for each $q\,(\geq n+1)$.
Since $(A \otimes_{A^e} \textrm{\boldmath{$P$}}_3)_n =0$,
we have $(\tilde{d}_3)_n = 0$.

Moreover,
$A \otimes_{A^e} \textrm{\boldmath{$P$}}$
is not only $\mathbb{N}$-graded, but also $\Delta_q^{\rm c}/C_q$-graded.
A path $\gamma$ in $\Delta$ is a {\it cycle} if $s(\gamma) = t(\gamma)$.
The set of cycles of length $q$ is denoted by $\Delta_q^{\rm c} (\subset  \Delta_q)$.
A cycle $\gamma$ is a {\it basic} cycle provided that we can not write
$\gamma = \beta^i$, for $i \geq 2$.
The set of basic cycles of length $q$ is denoted by $\Delta_q^{\rm b}$.
Then we have the following isomorphisms:
\begin{align*}
	(A \otimes_{A^e} \textrm{\boldmath {$P$}}_1)_q
    	&\cong \begin{cases}
        	K\Delta_q^{\rm c}  & \text{if $1 \leq q \leq n$}, \\
            0 & \text{otherwise},
        \end{cases} \\
	(A \otimes_{A^e} \textrm{\boldmath {$P$}}_2)_q
    	&\cong \begin{cases}
        	K\Delta_q^{\rm c}  &\text{if $n \leq q \leq 2n-1$}, \\
            0 & \text{otherwise},
        \end{cases} \\
	(A \otimes_{A^e} \textrm{\boldmath {$P$}}_3)_q
    	&\cong \begin{cases}
        	K\Delta_q^{\rm c}  &\text{if $n+1 \leq q \leq 2n$}, \\
            0 & \text{otherwise}.
        \end{cases} \\
\end{align*}
%
%
Let $C_q$ be the cyclic group of order $q$, with generator $c$.
Then we define an action of $C_q$ on $\Delta_q^{\rm c}$ by
$c(a_1 \cdots a_q) = a_q a_1 \cdots a_{q-1}$.
For each $\gamma \in \Delta_q^{\rm c}$,
we define the {\it orbit} of $\gamma$ to be the subset
$\overline{\gamma} = \{ c^i (\gamma)\, |\, 1 \leq i \leq q \} \in \Delta_q^{\rm c}$.
We denote the set of orbits by $\Delta_q^{\rm c}/C_q$.
Since the differential $(\overline{d_*})_q$ preserves $\overline{\gamma}$,
the complex
$
	(A 
    	\otimes_{A^e} 
        	\textrm{\boldmath {$P$}}_*)_q
$
splits into subcomplexes
by
$$
	(A \otimes_{A^e} \textrm{\boldmath {$P$}}_i)_q
    	\cong
        	\bigoplus_{\overline{\gamma} \in \Delta_q^{\rm c}/C_q} 
            	((A \otimes_{A^e} \textrm{\boldmath {$P$}}_i)_q)_{\overline{\gamma}},
$$
for
$\overline{\gamma} \in \Delta_q^{\rm c}/C_q$,
where 
$
((A \otimes_{A^e} \textrm{\boldmath {$P$}}_i)_q)_{\overline{\gamma}}
	= \bigoplus_{\gamma' \in \overline{\gamma}} K{\gamma'}
$
the direct sum of subspaces.
We express this fact by saying that
$((A \otimes_{A^e} \textrm{\boldmath{$P$}}_*)_q ,\, (\tilde{d}_*)_q)$ is
$\Delta_q^{\rm c}/C_q$-graded (cf. \cite{skoldberg}).
Sk\"oldberg computed the homology groups $HH_{p,q}(A)$ by exploiting
the above fact that 
$((A \otimes_{A^e} \textrm{\boldmath{$P$}}_*)_q ,\, (\tilde{d}_*)_q)$ is
$\Delta_q^{\rm c}/C_q$-graded.
We only show the second term of $HH_{p}(A)$.
\begin{theorem}[{See \cite[Theorem 2]{skoldberg}}]\label{2nd Hochschild}
    Let $A$ be a truncated quiver algebra $K\Delta/R_\Delta^n$.
    Then the degree $q$ part of the second Hochschild homology $HH_{2,\,q}(A)$ is given by
	\begin{align*}
    	HH_{2,\,q}&(A) \\
        & \hspace{-10pt}= 
        \begin{cases}
        	K^{a_q}  &  {\it if}\: n+1 \leq q \leq 2n-1 , \\
            \bigoplus_{r|q}
            	(K^{{\rm gcd}(n,r)-1} 
                	\oplus {\rm Ker} (\cdot \frac{n}{{\rm gcd}(n,r)}: K \rightarrow K
                ))^{b_r}
                & {\it if} \: q=n, \\
            0 & otherwise.
        \end{cases}
    \end{align*}
    Here we set $a_q := {\rm card}(\Delta_q^{\rm c}/C_q)$
    and $b_r := {\rm card}(\Delta_r^{\rm b}/C_r)$.
\end{theorem}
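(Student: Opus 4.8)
The plan is to compute $HH_{2,\,q}(A)$ as the homology at the middle term of the degree $q$ part of $A \otimes_{A^e} \textrm{\boldmath{$P$}}$, i.e.\ $HH_{2,\,q}(A) = {\rm Ker}(\tilde d_2)_q / {\rm Im}(\tilde d_3)_q$, and, following Sk\"oldberg, to carry this out one orbit at a time using the $\Delta_q^{\rm c}/C_q$-grading recalled above. Since $(A \otimes_{A^e} \textrm{\boldmath{$P$}}_2)_q = 0$ unless $n \leq q \leq 2n-1$ and $(A \otimes_{A^e} \textrm{\boldmath{$P$}}_3)_q = 0$ unless $n+1 \leq q \leq 2n$, only $n \leq q \leq 2n$ can contribute; this settles the ``otherwise'' cases $q < n$ and $q > 2n$ at once, and also $q = 2n$, where $(A \otimes_{A^e} \textrm{\boldmath{$P$}}_2)_{2n} = 0$ forces $HH_{2,\,2n}(A) = 0$.

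Next I would handle the range $n+1 \leq q \leq 2n-1$. Here $(\tilde d_2)_q = 0$ because its target $(A \otimes_{A^e} \textrm{\boldmath{$P$}}_1)_q$ vanishes, so $HH_{2,\,q}(A) = {\rm Coker}(\tilde d_3)_q$. Under the identifications $(A \otimes_{A^e} \textrm{\boldmath{$P$}}_2)_q \cong K\Delta_q^{\rm c} \cong (A \otimes_{A^e} \textrm{\boldmath{$P$}}_3)_q$ given above, formula \eqref{(d_3)_q} says that $(\tilde d_3)_q$ sends a cycle $\gamma$ to $\gamma - c(\gamma)$, i.e.\ it is the operator $1 - c$. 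On the summand $\bigoplus_{\gamma' \in \overline{\gamma}} K\gamma'$ attached to an orbit $\overline{\gamma}$ of size $s$, the operator $1-c$ is the usual cyclic difference operator on $K^s$, whose cokernel is one dimensional; summing over $\overline{\gamma} \in \Delta_q^{\rm c}/C_q$ gives $HH_{2,\,q}(A) \cong K^{a_q}$.

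The essential case is $q = n$. Now $(A \otimes_{A^e} \textrm{\boldmath{$P$}}_3)_n = 0$, so $HH_{2,\,n}(A) = {\rm Ker}(\tilde d_2)_n$, and under $(A \otimes_{A^e} \textrm{\boldmath{$P$}}_1)_n \cong K\Delta_n^{\rm c} \cong (A \otimes_{A^e} \textrm{\boldmath{$P$}}_2)_n$ formula \eqref{(d_2)_q} identifies $(\tilde d_2)_n$ with the norm operator $N = 1 + c + \cdots + c^{n-1}$. Any cycle of length $n$ is uniquely a power $\gamma = \beta^{n/r}$ of a basic cycle $\beta$ of length $r \mid n$, its $C_n$-orbit has exactly $r$ elements, and on the $r$-dimensional space $\bigoplus_{\gamma' \in \overline{\gamma}} K\gamma'$ one has $c^r = 1$, whence $N = \tfrac{n}{r}(1 + c + \cdots + c^{r-1})$. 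A direct computation in the cyclic basis shows that $1 + c + \cdots + c^{r-1}$ on $K^r$ has $(r-1)$-dimensional kernel and one-dimensional image, so ${\rm Ker}(N)$ on this summand sits in an exact sequence $0 \to K^{r-1} \to {\rm Ker}(N) \to {\rm Ker}(\cdot \tfrac{n}{r} : K \to K) \to 0$, which splits over the field $K$. Organizing the orbits in $\Delta_n^{\rm c}/C_n$ by the length $r$ of their associated basic cycle gives the bijection $\Delta_n^{\rm c}/C_n \cong \bigsqcup_{r \mid n} \Delta_r^{\rm b}/C_r$, so the value $r$ occurs with multiplicity $b_r$, and one gets $HH_{2,\,n}(A) \cong \bigoplus_{r \mid n} (K^{r-1} \oplus {\rm Ker}(\cdot \tfrac{n}{r} : K \to K))^{b_r}$; since $r \mid n$ forces ${\rm gcd}(n,r) = r$, this is the claimed formula.

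I expect the main obstacle to be the bookkeeping in identifying $(\tilde d_2)_n$ and $(\tilde d_3)_q$ with $N$ and $1 - c$ on each orbit summand — tracking the cyclic shifts and signs in \eqref{(d_2)_q} and \eqref{(d_3)_q} against the identifications $(A \otimes_{A^e} \textrm{\boldmath{$P$}}_i)_q \cong K\Delta_q^{\rm c}$ — together with the characteristic-sensitive analysis of ${\rm Ker}(N)$ on a single orbit, which is exactly what produces the ${\rm Ker}(\cdot \tfrac{n}{{\rm gcd}(n,r)})$ summand.
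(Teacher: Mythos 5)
Your argument is correct and is essentially the approach the paper takes (via Sk\"oldberg, whose orbit-by-orbit computation it recalls and reuses): restrict to $n\le q\le 2n$ using the vanishing of the graded pieces, identify $(\tilde d_3)_q$ with $1-c$ and $(\tilde d_2)_n$ with the norm $\sum_i c^i$ on $K\Delta_q^{\rm c}$, and analyze these on each $C_q$-orbit. The only point worth noting is that the paper itself states this result as a citation and only carries out these same circulant/norm computations explicitly for the cyclic quiver in the proof of its main theorem, where they match your general analysis.
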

%
%
Next, Cibils gave the following another projective resolution of $A$
as a left $A^e$- module.
\begin{lemma}[{See \cite[Lemma 1.1]{Cibils}}]
    Let $A$ be a truncated quiver algebra $K\Delta/R_\Delta^n$
    and $J=J(A)$ the Jacobson radical of $A$.
    Then there exists the following projective resolution of $A$ as a left $A^e$-module\,{\rm :}
    \begin{align*}
    	\textrm{\boldmath $Q$}:
        	\cdots \stackrel{}{\longrightarrow}
        		A \otimes _{K\Delta_0} J^{\otimes_{K\Delta_0}^3} &\otimes_{K\Delta_0} A\stackrel{\partial_3}{\longrightarrow}
                	A \otimes _{K\Delta_0} J^{\otimes_{K\Delta_0}^2} \otimes_{K\Delta_0} A \\
                      &\stackrel{\partial_2}{\longrightarrow}
                    	A \otimes _{K\Delta_0} J \otimes_{K\Delta_0} A\stackrel{\partial_1}{\longrightarrow}
                        	A \otimes_{K\Delta_0} A\stackrel{\partial_0}{\longrightarrow}
                            	A \stackrel{}{\longrightarrow}
                                	0.
    \end{align*}
	Here the differentials $\partial_2$ and $\partial_3$ are defined by
    \begin{align*}
    	\partial_2(x \otimes y_1 \otimes y_2 \otimes z)
        	&= xy_1 \otimes y_2 \otimes z
            	- x \otimes y_1 y_2 \otimes z
                	+ x \otimes y_1 \otimes y_2 z, \\
        \partial_3(x \otimes y_1 \otimes y_2 \otimes y_3 \otimes z)
        	&= x y_1 \otimes y_2 \otimes y_3 \otimes z
            	- x \otimes y_1 y_2 \otimes y_3 \otimes z \\
                	&\quad\quad\quad+ x \otimes y_1 \otimes y_2 y_3 \otimes z
                    	- x \otimes y_1 \otimes y_2 \otimes y_3 z,
    \end{align*}
    for $x,\,z \in A$ and $y_i \in J\,(1 \leq i \leq 3)$.
\end{lemma}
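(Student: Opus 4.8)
The plan is to recognize $\textrm{\boldmath $Q$}$ as the reduced bar resolution of $A$ relative to the separable subalgebra $E := K\Delta_0$, and then to verify the two properties that make it a projective resolution: that every term is a projective $A^e$-module, and that the augmented complex $\textrm{\boldmath $Q$} \to A \to 0$ is exact. Let $Q_p$ denote the $p$-th term of $\textrm{\boldmath $Q$}$. First I would record the decomposition $A = E \oplus J$ as $E$-bimodules, where $E = K\Delta_0 \cong K^{|\Delta_0|}$ and $J = J(A)$ is spanned by the classes of the paths of length $\ge 1$; thus every $a \in A$ is uniquely $a = e + \bar a$ with $e \in E$ and $\bar a \in J$, and $A/E \cong J$. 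Under this decomposition the $\partial_p$ are the usual alternating sums of partial multiplications, with the two outermost multiplications absorbed into the copies of $A$ and the inner ones landing inside $J$ because $J$ is an ideal; this makes each $\partial_p$ a well-defined $A$-bimodule map on $Q_p$, and $\partial_{p-1}\partial_p = 0$ is the standard bar identity.

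For the projectivity of the terms, the point is that $E$ is separable over $K$, so $E^e = E \otimes_K E^{op}$ is semisimple and hence every $E$-bimodule is projective over $E^e$; in particular $J^{\otimes_{K\Delta_0}^p}$ is. Since for any $E$-bimodule $M$ one has $A \otimes_{K\Delta_0} M \otimes_{K\Delta_0} A \cong A^e \otimes_{E^e} M$ as left $A^e$-modules, each $Q_p$ is a direct summand of a free $A^e$-module; the same argument with the $E$-bimodule $E$ (a summand of $E^e$, again by separability) in place of $M$ handles $Q_0 = A \otimes_{K\Delta_0} A$. Thus all the $Q_p$ are projective.

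For exactness I would exhibit an explicit contracting homotopy. Writing a general element of $Q_p$ as $a_0 \otimes j_1 \otimes \cdots \otimes j_p \otimes a_{p+1}$ with $a_0, a_{p+1} \in A$ and $j_i \in J$, put $h_{-1} : A \to Q_0$, $a \mapsto 1 \otimes a$, and for $p \ge 0$
\[
  h_p(a_0 \otimes j_1 \otimes \cdots \otimes j_p \otimes a_{p+1})
  = 1 \otimes \bar a_0 \otimes j_1 \otimes \cdots \otimes j_p \otimes a_{p+1}.
\]
One checks that $h_p$ is well defined (it respects the tensor relations over $K\Delta_0$, since the radical component of $a_0 e$ is $\bar a_0 e$ for $e \in E$) and right $A$-linear. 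A telescoping computation — splitting $a_0 = e_0 + \bar a_0$ and using $e_0 \otimes (-) = 1 \otimes e_0(-)$ over $K\Delta_0$ — then gives $\partial_{p+1} h_p + h_{p-1}\partial_p = \mathrm{id}_{Q_p}$ for $p \ge 0$ together with $\partial_0 h_{-1} = \mathrm{id}_A$. Hence the augmented complex is contractible as a complex of right $A$-modules, so exact, and $\textrm{\boldmath $Q$}$ is a projective resolution of $A$ over $A^e$.

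The step requiring the most care is the homotopy identity: one must see that the degeneracy terms in which the newly inserted entry $\bar a_0$ is multiplied into an adjacent factor, produced by $\partial_{p+1} h_p$, cancel in pairs against those produced by $h_{p-1}\partial_p$, while the $e_0$-parts get absorbed back into the leftmost $A$-factor. This is precisely the bookkeeping of the classical bar resolution equipped with one extra degeneracy, so it needs attention to signs and to the one-sided linearity of $h$ but no new idea. Alternatively one could deduce exactness from relative homological algebra — the relative bar resolution is a genuine resolution because $A$ is projective over $E$ on both sides — but this only relocates the same computation.
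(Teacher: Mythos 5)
Your argument is correct: the paper itself offers no proof of this lemma, citing it directly from Cibils, and your proof is in substance the standard (indeed Cibils' own) one — identify $\textrm{\boldmath $Q$}$ as the reduced bar resolution relative to the separable subalgebra $K\Delta_0$, get projectivity of each term from $A\otimes_{K\Delta_0}M\otimes_{K\Delta_0}A\cong A^e\otimes_{(K\Delta_0)^e}M$ and semisimplicity of $(K\Delta_0)^e$, and get exactness from the extra degeneracy $a_0\mapsto 1\otimes\bar a_0$. The two points you flag as needing care (well-definedness of $h_p$ via $\overline{a_0e}=\bar a_0e$, and the cancellation in $\partial_{p+1}h_p+h_{p-1}\partial_p=\mathrm{id}$ using $1\otimes e_0(-)=e_0\otimes(-)$ over $K\Delta_0$) do check out, so there is no gap.
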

In \cite{ACT}, Ames, Cagliero and Tirao gave
a chain map between the projective resolutions \textrm{\boldmath $Q$}
and \textrm{\boldmath $P$}.
We describe the second term of the chain map
$
	\pi : \textrm{\boldmath $Q$} \rightarrow \textrm{\boldmath $P$}
$
in Proposition \ref{chain cpx}.
\begin{proposition}[{See \cite[Section 4.2]{ACT}}]\label{chain cpx}
    Let $x_1$, $x_2$ be paths in $\Delta$.
    We set
    $
    	x_1 = a_1 a_2 \cdots a_{m_1},
    $
    $
    	x_2 = a_{m_1+1} a_{m_1+2} \cdots a_{m_1 + m_2},
    $
    where $a_1, a_2, \ldots , a_{m_1 + m_2} \in \Delta_1$.
    Then there exists a map
    $
		\pi_2 : \textrm{\boldmath $Q$}_2 \rightarrow \textrm{\boldmath $P$}_2
    $
    defined by the following equation{\rm :}
    \begin{align*}
    	\pi_2(a \otimes_{K\Delta_0} &x_1 \otimes_{K\Delta_0} x_2 \otimes_{K\Delta_0} b) \\
        	&= \begin{cases}
            	a \otimes_{K\Delta_0} a_1 \cdots a_n 
                	\otimes_{K\Delta_0} a_{n+1} \cdots a_{m_1 + m_2}b 
                &\text{if $m_1 + m_2 \geq n$,} \\
            	0 &{\it otherwise},
            \end{cases}
    \end{align*}
    for $a,\,b \in A$.
\end{proposition}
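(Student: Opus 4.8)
The plan is to verify directly that $\pi_2$ as defined intertwines the differentials, i.e. that $\tilde d_2 \circ \pi_2 = \pi_1 \circ \partial_2$, where $\pi_1 : \textrm{\boldmath $Q$}_1 \to \textrm{\boldmath $P$}_1$ is the degree-$1$ component of the chain map (which on a generator $a \otimes_{K\Delta_0} x \otimes_{K\Delta_0} b$ with $x = c_1 c_2 \cdots c_m$ a path of length $m \geq 1$ should be $\sum_{j=0}^{m-1} a c_1 \cdots c_j \otimes_{K\Delta_0} c_{j+1} \otimes_{K\Delta_0} c_{j+2} \cdots c_m b$, the standard comparison map in degree $1$). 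So the first step is to recall or fix this formula for $\pi_1$ from \cite{ACT}, and also recall $\pi_0 = \mathrm{id}$. Then $\pi$ being a chain map in the relevant range reduces to the single identity above together with the already-known identities $d_0 \pi_0 = \partial_0$ and $d_1 \pi_1 = \pi_0 \partial_1$.

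Next I would evaluate both sides of $\tilde d_2 \pi_2 = \pi_1 \partial_2$ on a typical generator $a \otimes_{K\Delta_0} x_1 \otimes_{K\Delta_0} x_2 \otimes_{K\Delta_0} b$ with $x_1, x_2 \in J$, writing $x_1 = a_1 \cdots a_{m_1}$, $x_2 = a_{m_1+1} \cdots a_{m_1+m_2}$ in terms of arrows, and split into cases according to the value of $m_1 + m_2$ relative to $n$. The delicate range is $m_1 + m_2 \geq n$; when $m_1 + m_2 < n$ both composites vanish (on the $\pi_1 \partial_2$ side one uses that each of $x_1 x_2$, and the two outer terms, is a path of length $< n$, so in $A = K\Delta/R_\Delta^n$ it survives but $\pi_1$ lands in $\textrm{\boldmath $P$}_1$ and then $\tilde d_2$... actually here the point is subtler and must be checked: $\pi_2 = 0$ forces the left side to be $0$, so one must confirm $\pi_1 \partial_2 = 0$ as well, using that $\pi_1$ applied to a path of length $\ell$ has $\tilde d_2$-image zero unless $\ell \geq n$ — wait, rather one checks the three terms of $\partial_2$ cancel after applying $\pi_1$; this telescoping is where care is needed). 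In the main range, apply $\partial_2$ to get the three-term alternating sum $a a_1 \otimes_{K\Delta_0} x_2 \otimes_{K\Delta_0} b - a \otimes_{K\Delta_0} x_1 x_2 \otimes_{K\Delta_0} b + a \otimes_{K\Delta_0} x_1 \otimes_{K\Delta_0} x_2 b$ (recall $x_1, x_2 \in J$, so $x_1$ has length $\geq 1$; if $x_1 = a_1$ is a single arrow the first term is read with $x_2 \in J$, possibly of length $\geq 1$), then apply $\pi_1$ to each term and expand; separately, apply $d_2$ (in its degree-graded form, equation (\ref{(d_2)_q}) or the formula in Theorem \ref{Skoldberg resolution}) to $\pi_2$ of the generator, namely to $a \otimes_{K\Delta_0} a_1 \cdots a_n \otimes_{K\Delta_0} a_{n+1} \cdots a_{m_1+m_2} b$. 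Both expansions are sums of terms of the shape $(\text{path}) \otimes_{K\Delta_0} (\text{arrow}) \otimes_{K\Delta_0} (\text{path})$ indexed by where the single tensor-slot arrow sits among $a_1, \ldots, a_{m_1+m_2}$, and the claim is that the two sums agree term by term after the telescoping cancellations inside $\pi_1 \partial_2$.

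The main obstacle I expect is bookkeeping: correctly tracking which products land in $R_\Delta^n$ and hence die in $A$, especially the boundary cases $m_1 + m_2 = n$, $m_1 + m_2 = n+1$, and the case where one of $x_1, x_2$ is a single arrow, since the definition of $\pi_2$ only keeps the leading length-$n$ segment $a_1 \cdots a_n$ and dumps the rest into the right-hand $A$-factor, so the matching with $d_2$'s uniform "cut after each of the first $n-1$ positions" rule must absorb all the higher-length contributions into the $A \otimes_{K\Delta_0} \Delta_1 \otimes_{K\Delta_0} A$ terms correctly — in particular terms of $d_2 \pi_2$ where the cut falls at position $n$ contribute $a a_1 \cdots a_{n-1} \otimes_{K\Delta_0} a_n \otimes_{K\Delta_0} a_{n+1}\cdots a_{m_1+m_2} b$ and these must be produced on the $\pi_1 \partial_2$ side, and one must check nothing is over- or under-counted. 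Once the term-by-term identification is set up with careful indexing, the verification is a finite, if fiddly, computation, and no conceptual difficulty remains; I would present it by fixing the index of the distinguished arrow and comparing coefficients.
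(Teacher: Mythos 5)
The paper gives no proof of this proposition at all---it is imported verbatim from \cite[Section 4.2]{ACT}---and your plan of directly verifying the degree-two chain-map identity $d_2\pi_2=\pi_1\partial_2$ with the standard degree-one comparison map $\pi_1(a\otimes c_1\cdots c_m\otimes b)=\sum_{j=0}^{m-1}ac_1\cdots c_j\otimes c_{j+1}\otimes c_{j+2}\cdots c_m b$ is exactly the intended argument. The bookkeeping you worry about does close: for $m_1+m_2<n$ the three sums coming from $\partial_2$ telescope to zero; for $m_1+m_2\geq n$ the middle term of $\partial_2$ vanishes because $x_1x_2=0$ in $A$, the two outer sums survive exactly for $m_1+m_2-n\leq j\leq n-1$ (the other terms are killed by $R_\Delta^n$ on one side or the other), and these are precisely the surviving terms of $d_2\bigl(a\otimes a_1\cdots a_n\otimes a_{n+1}\cdots a_{m_1+m_2}b\bigr)$; the only slip is notational, since the relevant differential is $d_2$ on $\textrm{\boldmath $P$}_2$, not $\tilde d_2$ on $A\otimes_{A^e}\textrm{\boldmath $P$}_2$.
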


%
%
%
To provide an isomorphism
$D(HH_{2}(A)) \cong H^2(A,\,D(A))$,
we define a homomorphism $\Theta : D(A \otimes_{K\Delta_0^e} K\Delta_n) \rightarrow \Hom_K(A^{\otimes 2},\,D(A))$.
In order to define $\Theta$, we define some homomorphisms.
We define the canonical isomorphism 
$
	\varphi : 
    	A \otimes_{A^e} \textrm{\boldmath $P$}_2 
    		\rightarrow
        		A \otimes_{K\Delta_0^e} K\Delta_n
$
in a similar way as (\ref{iso}),
$
	\nu_2 : A^{\otimes 4}
			\rightarrow
            	\textrm{\boldmath $Q$}_2
$
by
$$
	1 \otimes_K a_1 \otimes_K a_2 \otimes_K 1
    	\mapsto
        	\begin{cases}
            	1 \otimes_{K\Delta_0} a_1 \otimes_{K\Delta_0} a_2 \otimes_{K\Delta_0} 1 & {\rm if}\: a_1,\,a_2 \in J, \\
                0 & {\rm otherwise},
    		\end{cases}
$$
and
$\mu_2$ by (\ref{mu}).
We denote by $\Theta$ the composition of following maps
\begin{align*}
	D(A \otimes_{K\Delta_0^e} K\Delta_n)
    	&\xrightarrow{D\varphi} 
        	D(A \otimes_{A^e} \textrm{\boldmath $P$}_2) \\
            	&\xrightarrow{D({\rm id} \otimes \pi_2)} 
                	D(A \otimes_{A^e}\textrm{\boldmath $Q$}_2) \\
                    	&\xrightarrow{D({\rm id} \otimes \nu_2)} 
                        	D(A \otimes_{A^e} A^{\otimes 4}) \\
                            	&\xrightarrow{{\rm adj}} 
                                	\Hom_{A^e}(A^{\otimes 4},\,D(A)) \\
                                    	&\xrightarrow{\mu_2}
                                        	\Hom_K(A^{\otimes 2},\,D(A)),
\end{align*}
where 
$
	{\rm adj} 
    	: D(A \otimes_{A^e} A^{\otimes 4})
        	\rightarrow 
        		\Hom_{A^e}(A^{\otimes 4},\,D(A))
$
is the isomorphism induced by a natural isomorphism
$
D(A \otimes_{A^e} -)
	\rightarrow 
    	\Hom_{A^e}(-,\,D(A))
$,
namely,
for $g \in D(A \otimes_{A^e} A^{\otimes 4})$
and $x \in A^{\otimes 4}$,
$({\rm adj}(g))(x)$ is given by sending
$a \in A$ to
$g(a \otimes_{A^e} x)$.
For
$
x \otimes_{A^e} (1 \otimes_{K} a_1 \cdots a_{m_1} \otimes_{K} a_{m_1+1} \cdots a_{m_1+m_2} \otimes_K 1)
	\in A \otimes_{A^e} A^{\otimes 4}\,
    	(a_i \in \Delta_1),
$
we have
\begin{align*}
	x \otimes_{A^e} &(1 \otimes_{K} a_1 \cdots a_{m_1} \otimes_{K} a_{m_1+1} \cdots a_{m_1+m_2} \otimes_K 1) \\
    	&\xmapsto{{\rm id} \otimes \nu_2}
        	x \otimes_{A^e} (1 \otimes_{K\Delta_0} a_1 \cdots a_{m_1} \otimes_{K\Delta_0} a_{m_1+1} \cdots a_{m_1+m_2} \otimes_{K\Delta_0} 1) \\
    			&\xmapsto{{\rm id} \otimes\pi_2}
                	\begin{cases}
                    	x \otimes_{A^e} (1 \otimes_{K\Delta_0} a_1 \cdots a_{n} \otimes_{K\Delta_0} a_{n+1} \cdots a_{m_1+m_2})
                			& {\rm if}\: n \leq m_1 + m_2, \\
                        0
                        	& {\rm otherwise},
                    \end{cases} \\
                		&\xmapsto{\varphi}
                        	\begin{cases}
                    			a_{n+1} \cdots a_{m_1+m_2}x \otimes_{A^e} a_1 \cdots a_{n})
                					& {\rm if}\: n \leq m_1 + m_2, \\
                        		0
                        			& {\rm otherwise}.
                    		\end{cases}
\end{align*}
Hence, for an dual basis element
$
	u^* 
    	:= (a_{n+1} \cdots a_r \otimes_{K\Delta_0^e} a_1 a_2 \cdots a_n)^* 
        	\in D(A \otimes_{K\Delta_0^e} K\Delta_n)
    \,(a_i \in \Delta_1)
$,
$\Theta(u^*) \in \Hom_{K}(A^{\otimes 2},\, D(A))$ is the map as follows:
\begin{align}
	b_1 \cdots b_{m_1} &\otimes_K b_{m_1 +1} \cdots b_{m_1 + m_2} \notag \\
    	&\mapsto
            	\begin{cases}
                	(a_{m_1 +m_2 +1} \cdots a_r)^* 
                    	& \text{if $\: n \leq m_1 + m_2 \leq r$} \\
                    	&\text{and $b_t = a_t$ for $t (1 \leq t \leq m_1 + m_2)$,} \\
                    0 
                    	&{\rm otherwise}.\\
                \end{cases} \label{Theta}
\end{align}
The isomorphism
$D(HH_{2}(A)) \cong H^2(A,\,D(A))$
is induced by $\Theta$.
In Section \ref{main theorem},
we get 2-cocycles from $D(HH_{2,\,q}(A))$ through
the following isomorphism:
$$
        	\bigoplus_{q} D(HH_{2,\,q}(A))
            	\cong D(\bigoplus_{q} HH_{2,\,q}(A))
                	= D(HH_2(A))
                    	\xrightarrow{\sim} {H^2(A,\,D(A))}.
$$
We denote the composition of the above isomorphisms by $\Theta$ again.
%
%
\section{The ordinary quivers of Hochschild extension algebras for self-injective Nakayama algebras}\label{main theorem}
In this section, we compute the ordinary quivers of Hochschild extension algebras
for self-injective Nakayama algebras.
In the following,
for an algebra $A$, we denote the Jacobson radical of $A$ by $J(A)$.

Let $\Delta$ be a finite quiver and $A=K\Delta/I$ for an admissible ideal $I$.
Let $\Delta_0 = \{1, 2, \ldots, l \}$ be the set of vertices of $\Delta$.
For each pair of integers $i$ and $j$
with $i,\,j \in \Delta_0$,
we denote
the number of arrows from $i$ to $j$ in the ordinary quiver of $A$
by ${\rm N}_A (i,\,j)$.
%
%
\begin{lemma}\label{key lemma2}
	Let $\Delta$ be a finite quiver and $A=K\Delta/I$ for an admissible ideal $I$.
    Let $T_\alpha(A)$ be an extension algebra of $A$ defined by a $2$-cocycle
    $\alpha : A \times A \rightarrow D(A)$.
    We denote by $\Delta_{T_\alpha(A)}$ and $\Delta_{T_0(A)}$ the ordinary quiver of ${T_\alpha(A)}$
    and the trivial extension algebra ${T_0(A)}$, respectively.
    If $\alpha(e_i,\,-) = \alpha(-,\,e_i) = 0$ for all $i \in \Delta_0$,
    then we have the chain of subquivers of $\Delta_{T_0}(A)${\rm :}
    $$
    	\Delta \subseteq  \Delta_{T_\alpha(A)} \subseteq \Delta_{T_0(A)}.
    $$
\end{lemma}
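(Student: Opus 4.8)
The plan is to compare the ordinary quivers by analyzing the radical squared of the algebras $T_\alpha(A)$ and $T_0(A)$, since for a basic algebra $B = K\Delta_B/I_B$ the number of arrows from $i$ to $j$ equals $\dim_K e_i\bigl(J(B)/J(B)^2\bigr)e_j$. Recall that as $K$-vector spaces $T_\alpha(A) = A \oplus D(A)$ with multiplication given by \eqref{multi}, and $D(A)$ is a nilpotent ideal with $(D(A))^2 = 0$, so the radical of $T_\alpha(A)$ is $J(A) \oplus D(A)$, independently of $\alpha$. Hence the three algebras $A$, $T_\alpha(A)$, $T_0(A)$ all have the "same" radical as a vector space, and the three inclusions of quivers will follow once I show the corresponding reverse inclusions of radical squares, namely that $J(T_0(A))^2 \subseteq J(T_\alpha(A))^2 \subseteq J(A \oplus 0)^2 \oplus D(A)$ inside $J(A)\oplus D(A)$. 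Actually the cleanest route is to establish, for the appropriate idempotents $\mathbf e_i$ of $T_\alpha(A)$ lifting the $e_i$, the chain of surjections on arrow spaces $\mathrm N_A(i,j) \le \mathrm N_{T_\alpha(A)}(i,j) \le \mathrm N_{T_0(A)}(i,j)$.

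First I would pin down the idempotents. Under the hypothesis $\alpha(e_i,-) = \alpha(-,e_i) = 0$, the elements $(e_i,0) \in T_\alpha(A)$ satisfy $(e_i,0)(e_j,0) = (e_ie_j, \alpha(e_i,e_j)) = (\delta_{ij}e_i, 0)$, so $\{(e_i,0)\}$ is already a complete set of orthogonal idempotents of $T_\alpha(A)$ with $\sum_i (e_i,0) = (1_A, -\alpha(1,1)) = (1_A, 0) = 1_{T_\alpha(A)}$; no lifting correction is needed. This is the key simplification the hypothesis buys us, and I would state it as the first step. Consequently $e_i\bigl(J(T_\alpha(A))\bigr)e_j = (e_iJ(A)e_j)\oplus(e_iD(A)e_j)$ as a vector space, with the same description for $T_0(A)$, and the identification is compatible with the projection $T_\alpha(A) \twoheadrightarrow A$ and with the vector-space decomposition.

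Next I would compute $J(T_\alpha(A))^2$. For $(a,x),(b,y) \in J(A)\oplus D(A)$ we get $(a,x)(b,y) = (ab,\ ay + xb + \alpha(a,b))$. The first component lands in $J(A)^2$ and the second in $D(A)$ (since $(D(A))^2=0$, the $xb$ and $ay$ terms are the module actions, and $\alpha(a,b) \in D(A)$). So $J(T_\alpha(A))^2 = J(A)^2 \oplus \bigl(J(A)D(A) + D(A)J(A) + \alpha(J(A),J(A))\bigr)$. For the trivial extension ($\alpha = 0$) this is $J(A)^2 \oplus \bigl(J(A)D(A) + D(A)J(A)\bigr)$. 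Comparing: $J(T_0(A))^2 \subseteq J(T_\alpha(A))^2$ because the $\alpha(J(A),J(A))$ summand only enlarges the second component — wait, that gives the inclusion in the direction showing $\Delta_{T_\alpha(A)} \subseteq \Delta_{T_0(A)}$, i.e. a \emph{larger} radical square means \emph{fewer} arrows. And since every $\alpha(a,b) \in D(A)$ we have $J(T_\alpha(A))^2 \subseteq J(A)^2 \oplus D(A)$, whereas $J(T_0(A))^2 \supseteq J(A)^2 \oplus 0$ trivially; passing to the quotients $e_i(J/J^2)e_j$ and using that the $A$-component of $J(T_\alpha)^2$ is exactly $J(A)^2$ (the extra $\alpha$-terms sit in $D(A)$, a different graded piece), one reads off $\mathrm N_A(i,j) = \dim e_i(J(A)/J(A)^2)e_j \le \mathrm N_{T_\alpha(A)}(i,j)$ and $\mathrm N_{T_\alpha(A)}(i,j) \le \mathrm N_{T_0(A)}(i,j)$, which is the asserted chain $\Delta \subseteq \Delta_{T_\alpha(A)} \subseteq \Delta_{T_0(A)}$.

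The main obstacle is bookkeeping: making sure that "the $A$-part of $J(T_\alpha(A))^2$ equals $J(A)^2$" is genuinely true, i.e. that no $D(A)$-contribution can leak into the $A$-coordinate — this is immediate from the shape of \eqref{multi} since the $A$-coordinate of a product depends only on the $A$-coordinates of the factors — and, more delicately, that the inclusions of radical squares are \emph{compatible with the idempotent decomposition} so that they descend to each graded arrow-space $e_i(-)e_j$. The hypothesis on $\alpha$ is exactly what makes the idempotents of all three algebras literally coincide (as $(e_i,0)$), so these compatibilities are automatic; without it one would have to track a lifting $\mathbf e_i = (e_i, \text{correction})$ and the argument would be messier. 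I would therefore organize the write-up as: (1) idempotents coincide; (2) radicals coincide as vector spaces; (3) explicit formula for $J^2$ in each algebra via \eqref{multi}; (4) read off the two inclusions on each arrow-space.
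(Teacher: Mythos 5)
Your proposal is correct and follows essentially the same route as the paper: under the hypothesis the $(e_i,0)$ are already a complete set of orthogonal idempotents, one computes $e_iJ(T_\alpha(A))e_j$ and $e_iJ^2(T_\alpha(A))e_j$ componentwise from the multiplication rule, and the two inequalities ${\rm N}_A(i,j)\leq{\rm N}_{T_\alpha(A)}(i,j)\leq{\rm N}_{T_0(A)}(i,j)$ fall out of the containment $e_i(J(A)D(A)+D(A)J(A))e_j\subseteq e_i(J(A)D(A)+D(A)J(A)+\alpha(J(A),J(A)))e_j\subseteq e_iD(A)e_j$. The only cosmetic difference is that you obtain the formula for ${\rm N}_{T_0(A)}(i,j)$ as the $\alpha=0$ case of your computation, whereas the paper quotes it from Fern\'andez--Platzeck.
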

\begin{proof}
Let $\Delta_0 = \{1, 2, \ldots, l \}$ be the set of vertices of $\Delta$
and let $\{e_1, \ldots , e_l \}$ be the set of trivial paths in $K\Delta$.
Since $\alpha(e_i,\,-) = \alpha(-,\,e_i) = 0$ for all $i \in \Delta_0$,
it is easy to verify that
$
	\{(e_1,\,0), \ldots, (e_l,\,0)\}
$
is a complete set of primitive orthogonal idempotents in $T_\alpha(A)$.
So, $\Delta_{T_\alpha(A)}$ has $n$ vertices in a $1$-$1$-correspondence with
$
	(e_1,\,0), \ldots, (e_l,\,0).
$
For each pair of integers $i$ and $j$
with $i,\,j \in \Delta_0$,
we obtain
\begin{align*}
	{\rm N}_{T_\alpha(A)}&(i,\,j) \\
     &={\rm dim}_K
    	\frac{(e_i,\,0)J(T_\alpha(A))(e_j,\,0)}
             {(e_i,\,0)J^2(T_\alpha(A))(e_j,\,0)} \\
        & = {\rm dim}_K
        	\frac{(e_i J(A) e_j ,\, e_i D(A) e_j)}
                 {(e_i J^2(A)e_j ,\, e_i(J(A)D(A) + D(A) J(A) + \alpha(J(A),\,J(A))) e_j)} \\
                & ={\rm dim}_K 
                 	\frac{e_i J(A) e_j}
                         {e_i J^2(A)e_j}
                  + {\rm dim}_K
                     \frac{e_i D(A) e_j}
                     	  {e_i(J(A)D(A) + D(A) J(A) + \alpha(J(A),\,J(A))) e_j}\\
                & = {\rm dim}_K 
                 	\frac{e_i J(A) e_j}
                         {e_i J^2(A)e_j}
                 + {\rm dim}_K e_i D(A) e_j \\
                &\quad\quad\quad\quad 
                 - {\rm dim}_K\, e_i(J(A)D(A) + D(A) J(A) + \alpha(J(A),\,J(A))) e_j.
\end{align*}
Since
$$
	{\rm N}_{A}(i,\,j) 
    	= {\rm dim}_K \frac{e_i J(A) e_j}
                           {e_i J^2(A)e_j},
$$
we have ${\rm N}_{A}(i,\,j) \leq {\rm N}_{T_\alpha(A)}(i,\,j)$.
Consequently, $\Delta \subseteq  \Delta_{T_\alpha(A)}$.

On the other hand, by \cite[Proposition 2.2]{Fernandez}, we have
\begin{align*}
	{\rm N}_{T_0(A)}(i,\,j)
    	& = {\rm dim}_K 
                 	\frac{e_i J(A) e_j}
                         {e_i J^2(A)e_j}
          + {\rm dim}_K
          			\frac{e_i D(A) e_j}
                    	 {e_i(J(A)D(A) + D(A) J(A)) e_j} \\
		& = {\rm dim}_K 
                 	\frac{e_i J(A) e_j}
                         {e_i J^2(A)e_j}
          + {\rm dim}_K e_i D(A) e_j \\
        &\quad\quad
          - {\rm dim}_K\, e_i(J(A)D(A) + D(A) J(A)) e_j.
\end{align*}
Since
$$
	e_i(J(A)D(A) + D(A) J(A)) e_j
		\subseteq 
     		e_i(J(A)D(A) + D(A) J(A) + \alpha(J(A),\,J(A))) e_j,
$$
we have ${\rm N}_ {T_\alpha(A)}(i,\,j) \leq {\rm N}_ {T_0(A)}(i,\,j)$.
Hence $\Delta_{T_\alpha(A)}$ is a subquiver of $\Delta_{T_0(A)}$.
\end{proof}
%
%
\begin{lemma}\label{key lemma}
	Let $\Delta$ be a finite quiver and $A=K\Delta/I$ for an admissible ideal $I$.
    Let $T_\alpha(A)$ be an extension algebra of $A$ defined by a $2$-cocycle
    $\alpha : A \times A \rightarrow D(A)$.
    If $\alpha(e_i,\,-) = \alpha(-,\,e_i) = 0$ for all $i \in \Delta_0$,
    then the following conditions are equivalent{\rm :}
    \begin{itemize}
 		\item[\rm{(1)}] $\alpha(J(A),\,J(A)) \subseteq J(A)D(A) + D(A) J(A)$.
 		\item[\rm{(2)}] $\Delta_{T_\alpha(A)} = \Delta_{T_0(A)}$.
	\end{itemize}
\end{lemma}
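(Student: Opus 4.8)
The plan is to prove the equivalence by comparing the two formulas for $\mathrm{N}_{T_\alpha(A)}(i,j)$ and $\mathrm{N}_{T_0(A)}(i,j)$ extracted from the proof of Lemma \ref{key lemma2}. From there, one has
$$
\mathrm{N}_{T_0(A)}(i,j) - \mathrm{N}_{T_\alpha(A)}(i,j)
= \dim_K\, e_i\bigl(J(A)D(A)+D(A)J(A)+\alpha(J(A),J(A))\bigr)e_j
- \dim_K\, e_i\bigl(J(A)D(A)+D(A)J(A)\bigr)e_j,
$$
and since the second summand inside the first dimension contains the module inside the second, this difference is the dimension of the quotient
$$
e_i\bigl(J(A)D(A)+D(A)J(A)+\alpha(J(A),J(A))\bigr)e_j \,\big/\, e_i\bigl(J(A)D(A)+D(A)J(A)\bigr)e_j .
$$
Thus $\mathrm{N}_{T_\alpha(A)}(i,j) = \mathrm{N}_{T_0(A)}(i,j)$ holds for a fixed pair $(i,j)$ exactly when $e_i\,\alpha(J(A),J(A))\,e_j \subseteq e_i\bigl(J(A)D(A)+D(A)J(A)\bigr)e_j$.

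For the implication (1) $\Rightarrow$ (2): if $\alpha(J(A),J(A)) \subseteq J(A)D(A)+D(A)J(A)$, then the quotient above vanishes for every pair $(i,j)$, so $\mathrm{N}_{T_\alpha(A)}(i,j) = \mathrm{N}_{T_0(A)}(i,j)$ for all $i,j \in \Delta_0$; since by Lemma \ref{key lemma2} both $\Delta_{T_\alpha(A)}$ and $\Delta_{T_0(A)}$ have vertex set in bijection with $\{e_1,\dots,e_l\}$ and $\Delta_{T_\alpha(A)}$ is a subquiver of $\Delta_{T_0(A)}$, equality of all arrow-counts forces $\Delta_{T_\alpha(A)} = \Delta_{T_0(A)}$.

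For (2) $\Rightarrow$ (1): assuming $\Delta_{T_\alpha(A)} = \Delta_{T_0(A)}$, we get $\mathrm{N}_{T_\alpha(A)}(i,j) = \mathrm{N}_{T_0(A)}(i,j)$ for all pairs, hence the quotient module displayed above is zero for every $(i,j)$, i.e. $e_i\,\alpha(J(A),J(A))\,e_j \subseteq e_i\bigl(J(A)D(A)+D(A)J(A)\bigr)e_j$ for all $i,j$. Summing over all $i,j$ and using $\sum_i e_i = 1_A$ together with the decomposition $\alpha(J(A),J(A)) = \bigoplus_{i,j} e_i\,\alpha(J(A),J(A))\,e_j$ (which is valid because $\alpha$ is $A$-bilinear in the appropriate sense, so $e_i\,\alpha(x,y)\,e_j = \alpha(e_i x, y e_j)$ modulo the vanishing hypothesis on idempotents, keeping everything inside $J(A)D(A)+D(A)J(A)$) yields $\alpha(J(A),J(A)) \subseteq J(A)D(A)+D(A)J(A)$, which is (1).

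The main obstacle is the bookkeeping in (2) $\Rightarrow$ (1): one must be careful that the peripheral idempotents $e_i, e_j$ genuinely pick out the $(i,j)$-component of $\alpha(J(A),J(A))$ as a subset of $J(A)D(A)+D(A)J(A)$, and that reassembling these components recovers the whole of $\alpha(J(A),J(A))$ without leaving a ``diagonal'' or cross-term residue. This is where the hypothesis $\alpha(e_i,-) = \alpha(-,e_i) = 0$ is used crucially — it guarantees that $\alpha$ descends to a map on $J(A) \times J(A)$ and that $e_i\,\alpha(x,y)\,e_j$ behaves compatibly with the $K\Delta_0$-bimodule grading, so that the inclusion on each graded piece implies the inclusion globally. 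Everything else is a direct translation of the dimension identities already established in the proof of Lemma \ref{key lemma2}.
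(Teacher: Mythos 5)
Your proposal is correct and follows essentially the same route as the paper: both proofs reduce the statement to the arrow-count formulas of Lemma \ref{key lemma2} and identify $\mathrm{N}_{T_0(A)}(i,j)-\mathrm{N}_{T_\alpha(A)}(i,j)$ with the dimension of the quotient $e_i\bigl(J(A)D(A)+D(A)J(A)+\alpha(J(A),J(A))\bigr)e_j/e_i\bigl(J(A)D(A)+D(A)J(A)\bigr)e_j$. The only cosmetic difference is that you prove $(2)\Rightarrow(1)$ directly by reassembling the Peirce components via $\sum_i e_i=1_A$ (which works since $J(A)D(A)+D(A)J(A)$ is an $A$-sub-bimodule), whereas the paper argues by contraposition.
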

\begin{proof}
It is clear that the vertices $(\Delta_{T_\alpha(A)})_0$ and $(\Delta_{T_0(A)})_0$
coincide with $\Delta_0$.
If $\alpha(J(A),\,J(A)) \subseteq J(A)D(A) + D(A) J(A)$,
then
for each pair of integers $i$ and $j$
with $1 \leq i,\,j \leq l$,
we have
\begin{align*}
	{\rm N}_{T_\alpha(A)}&(i,\,j) \\
    	& ={\rm dim}_K 
                 	\frac{e_i J(A) e_j}
                         {e_i J^2(A)e_j}
                  + {\rm dim}_K
                     \frac{e_i D(A) e_j}
                     	  {e_i(J(A)D(A) + D(A) J(A) + \alpha(J(A),\,J(A))) e_j} \\
        	& = {\rm dim}_K
                 	\frac{e_i J(A) e_j}
                         {e_i J^2(A)e_j}
          		+ {\rm dim}_K
          			\frac{e_i D(A) e_j}
                         {e_i(J(A)D(A)+ D(A) J(A)) e_j}.
\end{align*}
Hence ${\rm N}_{T_\alpha(A)}(i,\,j)$ is equal to ${\rm N}_{T_0(A)}(i,\,j)$.

Conversely, if ($1$) does not hold, then there exist $x \in \alpha(J(A),\,J(A))$
and $i,\,j \in \Delta_0$ such that
$x \notin J(A)D(A) + D(A) J(A)$
and $e_i x e_j \neq 0$ in $D(A)$.
Therefore, we have
\begin{align*}
	{\rm N}_{T_0(A)}&(i,\,j) - {\rm N}_{T_\alpha(A)}(i,\,j) \\
    	&= {\rm dim}_K {e_i(J(A)D(A) + D(A) J(A) + \alpha(J(A),\,J(A))) e_j} \\
        	&\qquad\qquad\qquad - {\rm dim}_K {e_i(J(A)D(A)+ D(A) J(A)) e_j} \\
        &= {\rm dim}_K e_i \frac{J(A)D(A) + D(A) J(A) + \alpha(J(A),\,J(A))}
        						{J(A)D(A)+ D(A) J(A)} e_j 
        > 0.
\end{align*}
Hence $\Delta_{T_\alpha(A)}$ is not equal to $\Delta_{T_0(A)}$.
\end{proof}

%
%
From now on, we consider self-injective Nakayama algebras.
Let $\Delta$ be the following cyclic quiver
with $s\,(\geq 1)$ vertices and $s$ arrows:
$$
{\unitlength 0.1in
\begin{picture}( 23.4600, 20.2400)( 22.0400,-34.4700)
%
{\color[named]{Black}{%
\special{pn 8}%
\special{pa 2938 1720}%
\special{pa 2964 1700}%
\special{pa 3016 1664}%
\special{pa 3072 1632}%
\special{pa 3100 1618}%
\special{pa 3130 1604}%
\special{pa 3160 1592}%
\special{pa 3188 1580}%
\special{pa 3220 1570}%
\special{pa 3280 1554}%
\special{pa 3344 1542}%
\special{pa 3374 1536}%
\special{pa 3406 1534}%
\special{pa 3418 1534}%
\special{fp}%
}}%
%
{\color[named]{Black}{%
\special{pn 8}%
\special{pa 3418 1540}%
\special{pa 3454 1538}%
\special{fp}%
\special{sh 1}%
\special{pa 3454 1538}%
\special{pa 3386 1522}%
\special{pa 3402 1542}%
\special{pa 3390 1562}%
\special{pa 3454 1538}%
\special{fp}%
}}%
\put(35.5900,-15.3200){\makebox(0,0){$1$}}%
%
{\color[named]{Black}{%
\special{pn 8}%
\special{pa 3642 1532}%
\special{pa 3706 1536}%
\special{pa 3770 1544}%
\special{pa 3800 1548}%
\special{pa 3832 1556}%
\special{pa 3862 1564}%
\special{pa 3894 1572}%
\special{pa 3954 1596}%
\special{pa 4012 1622}%
\special{pa 4068 1652}%
\special{pa 4094 1668}%
\special{pa 4122 1686}%
\special{pa 4130 1692}%
\special{fp}%
}}%
%
{\color[named]{Black}{%
\special{pn 8}%
\special{pa 4126 1698}%
\special{pa 4158 1722}%
\special{fp}%
\special{sh 1}%
\special{pa 4158 1722}%
\special{pa 4116 1666}%
\special{pa 4114 1690}%
\special{pa 4092 1698}%
\special{pa 4158 1722}%
\special{fp}%
}}%
%
{\color[named]{Black}{%
\special{pn 8}%
\special{pa 4294 1820}%
\special{pa 4318 1842}%
\special{pa 4360 1890}%
\special{pa 4380 1914}%
\special{pa 4416 1966}%
\special{pa 4432 1994}%
\special{pa 4450 2022}%
\special{pa 4464 2050}%
\special{pa 4492 2108}%
\special{pa 4504 2136}%
\special{pa 4516 2168}%
\special{pa 4536 2228}%
\special{pa 4544 2258}%
\special{pa 4548 2280}%
\special{fp}%
}}%
%
{\color[named]{Black}{%
\special{pn 8}%
\special{pa 4540 2282}%
\special{pa 4550 2320}%
\special{fp}%
\special{sh 1}%
\special{pa 4550 2320}%
\special{pa 4554 2250}%
\special{pa 4538 2268}%
\special{pa 4514 2260}%
\special{pa 4550 2320}%
\special{fp}%
}}%
%
{\color[named]{Black}{%
\special{pn 8}%
\special{pa 4550 2508}%
\special{pa 4550 2572}%
\special{pa 4548 2604}%
\special{pa 4544 2636}%
\special{pa 4540 2666}%
\special{pa 4534 2698}%
\special{pa 4520 2760}%
\special{pa 4512 2792}%
\special{pa 4482 2882}%
\special{pa 4468 2912}%
\special{pa 4440 2968}%
\special{pa 4424 2996}%
\special{pa 4412 3018}%
\special{fp}%
}}%
%
{\color[named]{Black}{%
\special{pn 8}%
\special{pa 4406 3014}%
\special{pa 4384 3046}%
\special{fp}%
\special{sh 1}%
\special{pa 4384 3046}%
\special{pa 4438 3002}%
\special{pa 4414 3002}%
\special{pa 4404 2980}%
\special{pa 4384 3046}%
\special{fp}%
}}%
\put(42.3600,-17.4100){\makebox(0,0){$2$}}%
\put(45.5000,-23.9400){\makebox(0,0){$3$}}%
%
{\color[named]{Black}{%
\special{pn 8}%
\special{pn 8}%
\special{pa 4306 3126}%
\special{pa 4300 3132}%
\special{fp}%
\special{pa 4276 3160}%
\special{pa 4270 3166}%
\special{fp}%
\special{pa 4244 3192}%
\special{pa 4238 3198}%
\special{fp}%
\special{pa 4210 3222}%
\special{pa 4204 3226}%
\special{fp}%
\special{pa 4176 3250}%
\special{pa 4170 3254}%
\special{fp}%
\special{pa 4140 3276}%
\special{pa 4134 3282}%
\special{fp}%
\special{pa 4104 3302}%
\special{pa 4098 3306}%
\special{fp}%
\special{pa 4066 3324}%
\special{pa 4060 3328}%
\special{fp}%
\special{pa 4028 3344}%
\special{pa 4022 3348}%
\special{fp}%
\special{pa 3990 3364}%
\special{pa 3983 3367}%
\special{fp}%
\special{pa 3950 3380}%
\special{pa 3943 3384}%
\special{fp}%
\special{pa 3909 3396}%
\special{pa 3902 3400}%
\special{fp}%
\special{pa 3868 3410}%
\special{pa 3861 3412}%
\special{fp}%
\special{pa 3826 3421}%
\special{pa 3818 3424}%
\special{fp}%
\special{pa 3783 3431}%
\special{pa 3776 3432}%
\special{fp}%
\special{pa 3739 3438}%
\special{pa 3732 3440}%
\special{fp}%
\special{pa 3695 3444}%
\special{pa 3687 3444}%
\special{fp}%
\special{pa 3650 3446}%
\special{pa 3642 3446}%
\special{fp}%
\special{pa 3604 3448}%
\special{pa 3596 3448}%
\special{fp}%
\special{pa 3559 3446}%
\special{pa 3551 3446}%
\special{fp}%
\special{pa 3514 3442}%
\special{pa 3506 3442}%
\special{fp}%
\special{pa 3470 3436}%
\special{pa 3462 3436}%
\special{fp}%
}}%
%
{\color[named]{Black}{%
\special{pn 8}%
\special{pa 3392 3424}%
\special{pa 3362 3418}%
\special{pa 3330 3410}%
\special{pa 3270 3390}%
\special{pa 3240 3378}%
\special{pa 3212 3366}%
\special{pa 3154 3338}%
\special{pa 3126 3322}%
\special{pa 3100 3306}%
\special{pa 3072 3288}%
\special{pa 3046 3270}%
\special{pa 2996 3230}%
\special{pa 2972 3208}%
\special{pa 2950 3186}%
\special{pa 2938 3176}%
\special{fp}%
}}%
%
{\color[named]{Black}{%
\special{pn 8}%
\special{pa 2944 3170}%
\special{pa 2918 3142}%
\special{fp}%
\special{sh 1}%
\special{pa 2918 3142}%
\special{pa 2948 3204}%
\special{pa 2954 3182}%
\special{pa 2978 3178}%
\special{pa 2918 3142}%
\special{fp}%
}}%
%
{\color[named]{Black}{%
\special{pn 8}%
\special{pa 2618 2352}%
\special{pa 2622 2288}%
\special{pa 2628 2258}%
\special{pa 2634 2226}%
\special{pa 2642 2194}%
\special{pa 2658 2134}%
\special{pa 2668 2102}%
\special{pa 2680 2074}%
\special{pa 2690 2044}%
\special{pa 2704 2014}%
\special{pa 2718 1986}%
\special{pa 2734 1958}%
\special{pa 2748 1930}%
\special{pa 2766 1902}%
\special{pa 2784 1876}%
\special{pa 2798 1856}%
\special{fp}%
}}%
%
{\color[named]{Black}{%
\special{pn 8}%
\special{pa 2804 1860}%
\special{pa 2826 1830}%
\special{fp}%
\special{sh 1}%
\special{pa 2826 1830}%
\special{pa 2770 1872}%
\special{pa 2794 1872}%
\special{pa 2804 1896}%
\special{pa 2826 1830}%
\special{fp}%
}}%
%
{\color[named]{Black}{%
\special{pn 8}%
\special{pa 2800 3008}%
\special{pa 2780 2982}%
\special{pa 2764 2956}%
\special{pa 2748 2928}%
\special{pa 2730 2900}%
\special{pa 2702 2844}%
\special{pa 2678 2784}%
\special{pa 2658 2724}%
\special{pa 2650 2694}%
\special{pa 2636 2632}%
\special{pa 2628 2600}%
\special{pa 2624 2568}%
\special{pa 2622 2536}%
\special{pa 2620 2512}%
\special{fp}%
}}%
%
{\color[named]{Black}{%
\special{pn 8}%
\special{pa 2628 2512}%
\special{pa 2626 2474}%
\special{fp}%
\special{sh 1}%
\special{pa 2626 2474}%
\special{pa 2610 2542}%
\special{pa 2628 2528}%
\special{pa 2650 2540}%
\special{pa 2626 2474}%
\special{fp}%
}}%
\put(28.8000,-17.5700){\makebox(0,0){$s$}}%
\put(25.7000,-24.0300){\makebox(0,0){$s-1$}}%
\put(28.4300,-30.5800){\makebox(0,0){$s-2$}}%
\put(40.1400,-15.0300){\makebox(0,0){$x_1$}}%
\put(44.9800,-19.1600){\makebox(0,0){$x_2$}}%
\put(46.2300,-27.7700){\makebox(0,0){$x_3$}}%
\put(30.4200,-15.2800){\makebox(0,0){$x_s$}}%
\put(24.8900,-20.0900){\makebox(0,0){$x_{s-1}$}}%
\put(24.9300,-28.0000){\makebox(0,0){$x_{s-2}$}}%
\end{picture}}%

$$
Suppose $n \geq 2$ and $A = K\Delta/R_\Delta^n$,
which is called a {\it truncated cycle algebra} in \cite{Bardzell},
where $R_\Delta^n$ is the two-sided ideal of $K\Delta$
generated by the paths of length $n$.
We regard the subscripts $i$ of $e_i$ and $x_i$ modulo $s\, (1 \leq i \leq s)$.
By Theorem \ref{2nd Hochschild},
the second Hochschild homology is given by
\begin{equation}\label{2nd Hochschild of truncated}
    	HH_{2,\,q}(A) = 
        \begin{cases}
        	K  &  \text{if $s|q$ and $n+1 \leq q \leq 2n-1$}, \\
            K^{s-1} 
            	\oplus
                	{\rm Ker} (\cdot \frac{n}{s}: K \rightarrow K)
                & \text{if $s|q$ and $q=n$}, \\
            0 & \text{otherwise}.
        \end{cases}
\end{equation}
We have the following main theorem about the ordinary quiver of Hochschild extension algebras.
\begin{theorem}\label{Main theorem}
	Suppose that $n \geq 2$, $A = K\Delta/R_\Delta^n$ and
	$n \leq q \leq 2n-1$.
    Let $\alpha : A \times A \rightarrow D(A)$
    be a $2$-cocycle such that the cohomology class
    $[\alpha]$ of $\alpha$
    belongs to $\Theta(D(HH_{2,\,q}(A)))$,
    and let $T_\alpha(A)$ be the Hochschild extension algebra of $A$
    defined by $\alpha$.
	Then the ordinary quiver $\Delta_{T_\alpha(A)}$ is given by
	$$
		\Delta_{T_\alpha(A)}
    		= \begin{cases}
        		\Delta_{T_0(A)} & {\it if}\quad n \leq q \leq 2n-2, \\
            	\Delta          & {\it if}\quad q = 2n-1.
        	\end{cases}
	$$
\end{theorem}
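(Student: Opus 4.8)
The plan is to reduce everything to Lemma~\ref{key lemma2}, Lemma~\ref{key lemma} and the explicit shape of the cocycle produced by $\Theta$. Since two cohomologous $2$-cocycles define equivalent, hence isomorphic, Hochschild extension algebras, I may replace $\alpha$ by the canonical representative $\Theta(\xi)$ of its class for a suitable $\xi \in D(HH_{2,\,q}(A))$. The first thing to observe is that this representative is normalized, i.e.\ $\alpha(e_i,\,-) = \alpha(-,\,e_i) = 0$ for all $i \in \Delta_0$: indeed, in the construction of $\Theta$ the map $\nu_2$ sends $1 \otimes_K a_1 \otimes_K a_2 \otimes_K 1$ to $0$ as soon as $a_1$ or $a_2$ fails to lie in $J(A)$, and $e_i \notin J(A)$. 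Hence both lemmas apply: we already have $\Delta \subseteq \Delta_{T_\alpha(A)} \subseteq \Delta_{T_0(A)}$, and $\Delta_{T_\alpha(A)} = \Delta_{T_0(A)}$ holds precisely when $\alpha(J(A),\,J(A)) \subseteq J(A)D(A) + D(A)J(A)$. I would then record the graded picture of $D(A)$: the algebra $A$ has $K$-basis the paths of $\Delta$ of length $< n$, $J(A)$ is spanned by the paths of length $\geq 1$, and a direct computation of the bimodule action on the dual basis shows that $J(A)D(A) + D(A)J(A)$ is the $K$-span of the $p^{*}$ with $p$ a path of length $\leq n-2$; thus the classes of the $p^{*}$ with $p$ of length $n-1$ form a basis of $D(A)/(J(A)D(A)+D(A)J(A))$, and there are exactly $s$ of them, one out of each vertex.

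Next I would extract a degree bound from \eqref{Theta}. A dual basis vector defining a class in $D(HH_{2,\,q}(A))$ has the form $(a_{n+1}\cdots a_q \otimes a_1 \cdots a_n)^{*}$, of degree $q$, and \eqref{Theta} shows that the associated cochain sends every tensor of paths to a scalar multiple of some $(a_{m_1+m_2+1}\cdots a_q)^{*}$ with $m_1 + m_2 \geq n$, i.e.\ to a multiple of $p^{*}$ for a path $p$ of length $\leq q-n$. Hence $\alpha(J(A),\,J(A))$ is contained in the $K$-span of the $p^{*}$ with $p$ of length $\leq q-n$. When $n \leq q \leq 2n-2$ this bound reads $\leq n-2$, so $\alpha(J(A),\,J(A)) \subseteq J(A)D(A)+D(A)J(A)$ and Lemma~\ref{key lemma} yields $\Delta_{T_\alpha(A)} = \Delta_{T_0(A)}$. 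This settles the first case.

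For $q = 2n-1$ the class is non-zero only when $s \mid 2n-1$ (by \eqref{2nd Hochschild of truncated}), and then every length-$n$ path $x_i x_{i+1}\cdots x_{i+n-1}$ closes up to the unique cycle $\delta_i = x_i x_{i+1}\cdots x_{i+2n-2}$ of length $2n-1$, while $\delta_1,\dots,\delta_s$ form a single $C_{2n-1}$-orbit; so $HH_{2,\,2n-1}(A)\cong K$ and $D(HH_{2,\,2n-1}(A))$ is spanned by the orbit sum $\xi_0 = \sum_{i=1}^{s} \bigl( x_{i+n}\cdots x_{i+2n-2} \otimes x_i \cdots x_{i+n-1} \bigr)^{*}$. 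Taking $\alpha = \Theta(\xi_0)$ and evaluating \eqref{Theta} on the input $x_i \otimes (x_{i+1}\cdots x_{i+n-1})$ (so that $m_1 = 1$, $m_2 = n-1$ and $m_1+m_2 = n$), only the summand with index $i$ contributes, because the others require the first arrow of the input to be $x_j$ with $j \neq i$; one gets $\alpha\bigl(x_i \otimes x_{i+1}\cdots x_{i+n-1}\bigr) = \bigl( x_{i+n}\cdots x_{i+2n-2} \bigr)^{*}$, the dual of a path of length $n-1$. As $i$ runs over $\Z/s\Z$ these exhaust all length-$(n-1)$ dual basis vectors, so
$$
	J(A)D(A) + D(A)J(A) + \alpha(J(A),\,J(A)) = D(A).
$$
Multiplying this identity on the left by $e_i$ and on the right by $e_j$ and substituting it into the expression for ${\rm N}_{T_\alpha(A)}(i,\,j)$ obtained in the proof of Lemma~\ref{key lemma2} makes the two $\dim_K$-terms cancel, so ${\rm N}_{T_\alpha(A)}(i,\,j) = {\rm N}_{A}(i,\,j)$ for all $i,\,j$; since the vertex sets agree, $\Delta_{T_\alpha(A)} = \Delta$.

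I expect the technical heart to be the last step: pinning down that the generator of $D(HH_{2,\,2n-1}(A))$ is the symmetric sum $\xi_0$ over the orbit of cycles, and checking that under \eqref{Theta} the resulting cocycle maps the $s$ inputs $x_i \otimes x_{i+1}\cdots x_{i+n-1}$ onto a basis of the top layer $D(A)/(J(A)D(A)+D(A)J(A))$. Once that surjectivity is in hand the quiver computation is purely formal, and the remaining ingredients (normalization of $\Theta(\xi)$, the description of $J(A)D(A)+D(A)J(A)$, and the degree bound) are routine.
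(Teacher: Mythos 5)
Your argument is correct, and while it rests on the same pillars as the paper's proof (Lemmas \ref{key lemma2} and \ref{key lemma}, the normalization $\alpha(e_i,-)=\alpha(-,e_i)=0$ coming from $\nu_2$, and the explicit formula (\ref{Theta})), the way you handle the range $n \leq q \leq 2n-2$ is genuinely different and cleaner. The paper splits this range into two cases ($q=n$ and $n+1\leq q\leq 2n-2$), computes explicit bases of $D(HH_{2,q}(A))$ via the circulant matrices for $(\tilde d_2)_q$ and $(\tilde d_3)_q$, and in the $q=n$ case even distinguishes ${\rm char}(K)\mid (n/s)$ from ${\rm char}(K)\nmid(n/s)$; you bypass all of this with the single observation that a degree-$q$ dual basis vector is sent by $\Theta$ into the span of $p^*$ with $|p|\leq q-n\leq n-2$, which is exactly $J(A)D(A)+D(A)J(A)$, so Lemma \ref{key lemma} applies uniformly. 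This buys a shorter, characteristic-free argument and makes transparent \emph{why} $q=2n-1$ is the exceptional degree: only there does the image of $\Theta$ reach the top layer $D(A)/(J(A)D(A)+D(A)J(A))$ spanned by the duals of length-$(n-1)$ paths. Your treatment of $q=2n-1$ matches the paper's, except that you show ${\rm N}_{T_\alpha(A)}(i,j)={\rm N}_A(i,j)$ for all pairs rather than only for the pairs $(i,i-n+1)$ where $\Delta_{T_0(A)}$ has extra arrows; both are fine. Two small caveats, neither of which puts you below the paper's own standard of rigor: your identification of the generator of $D(HH_{2,2n-1}(A))$ as the orbit sum $\sum_i v_i^*$ is asserted rather than derived from the kernel of the circulant matrix (the paper does this computation in (\ref{case3})), and, like the paper, your case $q=2n-1$ tacitly assumes $[\alpha]\neq 0$ — if $[\alpha]=0$ then $T_\alpha(A)\cong T_0(A)$ and the quiver is $\Delta_{T_0(A)}$, an imprecision already present in the theorem statement itself.
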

\begin{proof}
If $s \nmid q$, then $HH_{2,\,q}(A) = 0$ by (\ref{2nd Hochschild of truncated}).
It is suffices to consider the case $q$ be divided by $s$.
We will investigate the ordinary quiver
dividing the proof into the following three cases:
\begin{itemize}
	 \setlength{\leftskip}{12pt}
	\item[{\it Case} $1$:] $n+1 \leq q \leq 2n-2$, 
	\item[{\it Case} $2$:] $q = n$,
	\item[{\it Case} $3$:] $q = 2n-1$. 
\end{itemize}

{\it Case} $1$:
We set
$$
	v_i = x_{i+n} \cdots x_{i+q-1} \otimes_{K\Delta_0^e} x_i x_{i+1} \cdots x_{i+n-1},
$$
and
$$
	w_i = x_{i+n+1} \cdots x_{i+q-1} \otimes_{K\Delta_0^e} x_i x_{i+1} \cdots x_{i+n},
$$
for $1 \leq i \leq s$.
Then
$\{v_1 , \ldots , v_s \}$ is the basis of $(A \otimes_{K\Delta_0^e} K\Delta_{n})_q$,
and
$\{w_1 , \ldots , w_s \}$ is the basis of $(A \otimes_{K\Delta_0^e} K\Delta_{n+1})_{q}$.
By (\ref{(d_3)_q}),
$
	(\tilde{d}_{3})_q
    	: (A \otimes_{K\Delta_0^e} K\Delta_{n+1})_{q} 
        	\rightarrow 
            	(A \otimes_{K\Delta_0^e} K\Delta_{n})_q
$
is given by
\begin{equation}\label{d_3}
	(\tilde{d}_{3})_q (w_1, \ldots , w_s)
    	=(v_1 , \ldots , v_s)
        	\left(
				\begin{array}{ccccc}
 					-1     & 0      & \cdots & 0      & 1 \\
 					 1     &-1      & \ddots &        & 0 \\
 					 0     & \ddots & \ddots & \ddots & \vdots \\
 					 \vdots&        & \ddots & \ddots & 0 \\
 					 0     & \cdots & 0      & 1      & -1
				\end{array}
			\right)
\end{equation}
and by (\ref{(d_2)_q}), $(\tilde{d}_{2})_q$ is the zero map.
By (\ref{d_3}) and the fact $(\tilde{d}_2)_q = 0$,
we have the following isomorphism:
\begin{align}
	D(HH_{2,\,q}(A))
    	&\cong {\rm Ker}\,(D((\tilde{d}_{3})_q))/{\rm Im}\,(D((\tilde{d}_2)_q)) \notag \\
        	&= {\rm Ker}\,(D((\tilde{d}_{3})_q))
            = \langle v_1^* + \cdots + v_s^* \rangle, \label{case1}
\end{align}
where $v_i^* \in D((A \otimes _{K\Delta_0^e} \Delta_n)_q)$
is the dual basis element for each $1 \leq i \leq s$.
For $1 \leq i \leq s$,
by (\ref{Theta}),
$\Theta(v_i^*)$ is the map as follows:
\begin{align*}
	a_1 \cdots a_{m_1} \otimes_{K}& a_{m_1+1} \cdots a_{m_1 + m_2} \\
    	&\mapsto
         \begin{cases}
        	(x_{i+m_1+m_2} \cdots x_{i+q-1})^*
            &\text{if $n \leq m_1+m_2 \leq q$} \\ 
            &\text{and $a_t = x_{i+t-1}$ for $1 \leq t \leq m_1+m_2$}, \\
             0  &\text{otherwise}.
        \end{cases}
\end{align*}
We define a map
$\alpha_i : A \times A \rightarrow D(A)$ by 
$$
	\alpha_i(a_1 \cdots a_{m_1} ,\,a_{m_1+1} \cdots a_{m_1 + m_2}) 
    	= \Theta(v_i^*)(a_1 \cdots a_{m_1} \otimes_K a_{m_1+1} \cdots a_{m_1 + m_2}).
$$
For any $2$-cocycle
$\alpha : A \times A \rightarrow D(A)$,
there exists $k\, (\in K)$ such that
$[\alpha] = k[\sum_{i=1}^{s} \alpha_i]$.
Since, for $1 \leq i \leq s$,
we have
\begin{align*}
	(x_{i+m_1+m_2} \cdots x_{i+q-1})^* 
    	&= (x_{i+m_1+m_2-1} x_{i+m_1+m_2} \cdots x_{i+q-1})^* x_{i+m_1+m_2-1} \\
        	&\in J(A)D(A) + D(A) J(A),
\end{align*}
it follows that $\alpha$ satisfies the conditions of Lemma \ref{key lemma}.
Hence $\Delta_{T_\alpha(A)}$ coincides with $\Delta_{T_0(A)}$.

{\it Case} 2:
We consider the case $q=n$.
We set
$$
	u_i = x_{i+1} \cdots x_{i+n-1} \otimes_{K\Delta_0^e} x_i,
$$
and
$$
	v_i = 1 \otimes_{K\Delta_0^e} x_i x_{i+1} \cdots x_{i+n-1}
    		= e_i \otimes_{K\Delta_0^e} x_i x_{i+1} \cdots x_{i+n-1},
$$
for $1 \leq i \leq s$.
Then
$\{u_1 , \ldots , u_s \}$ is a basis of $(A \otimes_{K\Delta_0^e} K\Delta_1)_n$,
and
$\{v_1 , \ldots , v_s \}$ is a basis of $(A \otimes_{K\Delta_0^e} K\Delta_n)_n$.
By (\ref{(d_3)_q}), $(\tilde{d}_{3})_n$ is the zero map
and
$
	(\tilde{d}_{2})_n
    	: (A \otimes_{K\Delta_0^e} K\Delta_1)_n 
        	\rightarrow 
            	(A \otimes_{K\Delta_0^e} K\Delta_n)_n
$
is given by
$$
	(\tilde{d}_{2})_n (u_1, \ldots , u_s)
    	=(v_1 , \ldots , v_s)
        	\left(
				\begin{array}{ccccc}
 					n/s   & \cdots      & n/s  \\
 					\vdots       & \ddots      & \vdots       \\
 					n/s   & \cdots      & n/s
				\end{array}
			\right).
$$
If ${\rm char}(K) \nmid (n/s)$,
then we have
\begin{align}
	D(HH_{2,\,n}(A))
    	&\cong {\rm Ker}\,(D((\tilde{d}_3)_n))/{\rm Im}\,(D((\tilde{d}_2)_n)) \notag \\
        	&= \frac{\langle v_1^* ,\, \ldots ,\, v_s^* \rangle}{\langle v_1^* + \cdots + v_s^* \rangle}
            \cong \langle v_1^* ,\, \ldots ,\, v_{s-1}^* \rangle. \label{case2}
\end{align}
For $1 \leq i \leq s-1$,
$\Theta(v_i^*)$ is the map as follows:
$$
	a_1 \cdots a_{m_1} \otimes_K a_{m_1+1} \cdots a_{m_1 + m_2}
    	\mapsto
         \begin{cases}
        	e_i^* &\text{if $m_1+m_2 = n$} \\
            	  &\quad \text{and $a_t = x_{i+t-1}$ for $1 \leq t \leq n$,}\\
             0    &\text{otherwise}.
        \end{cases}
$$
Hence, we have a $2$-cocycle $\alpha_i : A \times A \rightarrow D(A)$ corresponding to
$v_i^*$ ($1 \leq i \leq s-1$) is given by
$$
 \alpha_i(a_1 \cdots a_{m_1} ,\, a_{m_1+1} \cdots a_{m_1 + m_2})
 	= \Theta(v_i^*)(a_1 \cdots a_{m_1} \otimes_K a_{m_1+1} \cdots a_{m_1 + m_2}).
$$
For any $2$-cocycle $\alpha$,
there exist $k_i \in K\,(1 \leq i \leq s-1)$ such that
$[\alpha] = \sum_{i=1}^{s-1} k_i [\alpha_i]$.
Since, for $1 \leq i \leq s-1$, we have
$$
	e_i^* 
    	= x_i x_i^*
        	\in J(A)D(A) + D(A) J(A),
$$
$\alpha$ satisfies the conditions of Lemma \ref{key lemma}.
Hence $\Delta_{T_{\alpha}(A)}$ coincides with $\Delta_{T_0(A)}$.

If ${\rm char}(K) \mid (n/s)$,
then we have the dual of the second Hochschild homology as follows:
\begin{align*}
	D(HH_{2,\,n}(A))
    	\cong {\rm Ker}\,(D(\tilde{d}_3)_n)/{\rm Im}\,(D(\tilde{d}_2)_n) 
        	= {\rm Ker}\,(D(\tilde{d}_3)_n)
        		= \langle v_1^* ,\, \ldots ,\, v_s^* \rangle.
\end{align*}
For any $2$-cocycle $\alpha$, there exist
$k_i \in K$ and $2$-cocycles $\alpha_i$ corresponding to $v_i^*$
$(1 \leq i \leq s)$ such that
$[\alpha] = \sum_{i=1}^{s} k_i [\alpha_i]$.
Then we have $\Delta_{T_{\alpha}(A)} = \Delta_{T_0(A)}$.
The proof is same as above.

{\it Case} $3$:
In this case we set $q=2n-1$.
We set
$$
	v_i = x_{i+n} \cdots x_{i+2n-2} \otimes_{K\Delta_0^e} x_i x_{i+1} \cdots x_{i+n-1}
$$
and
$$
	w_i = x_{i+n+1} \cdots x_{i+2n-2} \otimes_{K\Delta_0^e} x_i x_{i+1} \cdots x_{i+n},
$$
for $1 \leq i \leq s$.
In the same way to {\it Case} $1$,
we have
\begin{align}
	D(HH_{2,\,2n-1}(A))
    	&\cong {\rm Ker}\,(D((\tilde{d}_{3})_{2n-1}))/{\rm Im}\,(D((\tilde{d}_2)_{2n-1}))  \notag\\
        	&= {\rm Ker}\,(D((\tilde{d}_{3})_{2n-1})) 
            = \langle v_1^* + \cdots + v_s^* \rangle. \label{case3}
\end{align}
Then, for any $2$-cocycle $\alpha : A \times A \rightarrow D(A)$,
there exists $k\, (\in K)$ such that
$[\alpha] = k[\sum_{i=1}^{s} \alpha_i]$,
where
$$
	\alpha_i(a_1 \cdots a_{m_1} ,\, a_{m_1+1} \cdots a_{m_1 + m_2})
    	= \begin{cases}
        	(x_{m_1+m_2+i} \cdots x_{i+2n-2})^* \\
            \quad\quad \text{if $n \leq m_1+m_2 < 2n-1$}\\
            	\quad\quad \text{and $a_t = x_{i+t-1}$ for $1 \leq t \leq m_1+m_2$,} \\
             0 \quad \text{otherwise}.
        \end{cases}
$$
We will prove that $\Delta_{T_{\alpha}(A)}$ coincides with $\Delta$.
By \cite[Proposition 2.2]{Fernandez},
the quiver $\Delta_{T_0(A)}$ is given by
\begin{itemize}
 		\item[\rm{(1)}] $(\Delta_{T_0(A)})_0 = \Delta_0$,
        \item[\rm{(2)}] $(\Delta_{T_0(A)})_1 = \Delta_1 \cup \{y_1 ,\, \ldots , y_s \}$,
\end{itemize}
where $y_i$ is an arrow from $i$ to $i-n+1$, for $1 \leq i \leq s$.
Because of Lemma \ref{key lemma2},
it is sufficient to show that there is no arrow from $i$ to $i-n+1$ in $\Delta_{T_{\alpha}(A)}$.
Since 
$
	{\rm dim}_K\, {e_i J(A) e_{i-n+1}}/{e_i J^2(A)e_{i-n+1}} 
    	={\rm N}_A(i ,\, i-n+1)
        	= 0,
$
we have
$$
	{\rm N}_{T_\alpha(A)}(i ,\, i-n+1)
         = {\rm dim}_K 
          		\frac{e_i D(A) e_{i-n+1}}
             		  {e_i(J(A)D(A) + D(A) J(A) + \alpha(J(A),\,J(A))) e_{i-n+1}}.
$$
Since
$
\alpha(x_{i-2n+1},\, x_{i-2n+2} \cdots x_{i-n})
	= k(x_{i-n+1} x_{i-n+2} \cdots x_{i-1})^*
$
is in $\alpha(J(A),\,J(A))$,
it is easy to see that
\begin{align*}
	e_i D(A) e_{i-n+1}
    	&= K(x_{i-n+1} x_{i-n+2} \cdots x_{i-1})^* \\
        	&= e_i(J(A)D(A) + D(A) J(A) + \alpha(J(A),\,J(A))) e_{i-n+1}.
\end{align*}
Then we have ${\rm N}_{T_\alpha(A)}(i ,\, i-n+1) = 0$.
\end{proof}
%
%
\begin{corollary}\label{corollary1}
	Suppose that $n \geq 2$ and $A=K\Delta/R_\Delta^n$.
    Let $\alpha : A \times A \rightarrow D(A)$
    be a $2$-cocycle
    and $[\alpha] = \sum_{q=n}^{2n-1} [\beta_q]$,
    where $\beta_q : A \times A \rightarrow D(A)$
    is a $2$-cocycle such that the cohomology class
    $[\beta_q]$ of $\beta_q$
    belongs to $\Theta(D(HH_{2,\,q}(A)))$.
    Then the following equation holds{\rm :}
    $$
    \Delta_{T_\alpha(A)}=
    \begin{cases}
 		\Delta_{T_0(A)}  & \text{if $[\beta_{2n-1}] = 0$}, \\
        \Delta & \text{if $[\beta_{2n-1}] \neq 0$}.
	\end{cases}
    $$
\end{corollary}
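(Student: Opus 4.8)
The statement is essentially a consequence of Theorem~\ref{Main theorem}, obtained by examining $\alpha(J(A),J(A))$ one summand at a time. The plan is as follows. Since cohomologous $2$-cocycles give equivalent, hence isomorphic, Hochschild extension algebras, $\Delta_{T_\alpha(A)}$ depends only on $[\alpha]=\sum_{q=n}^{2n-1}[\beta_q]$; so I may assume $\alpha=\sum_{q=n}^{2n-1}\beta_q$ and, moreover, that each $\beta_q$ is the explicit representative used in the proof of Theorem~\ref{Main theorem}, namely a $K$-linear combination of the cocycles $\Theta(v_i^{*})$ when $n\le q\le 2n-2$ and $\beta_{2n-1}=k\sum_{i=1}^{s}\alpha_i$ for some $k\in K$ (with $k\neq0$ exactly when $[\beta_{2n-1}]\neq0$, since $D(HH_{2,\,2n-1}(A))\cong K$). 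First I record that all of these representatives vanish on trivial paths: a nonzero value of $\Theta(v_i^{*})$ or of $\alpha_i$ on a pair of paths forces their total length to be at least $n$, whereas a trivial path has length $0$ and every path of length $\ge n$ is $0$ in $A=K\Delta/R_\Delta^{n}$. Hence $\alpha(e_i,-)=\alpha(-,e_i)=0$ for all $i$, so Lemmas~\ref{key lemma2} and \ref{key lemma} are available for $\alpha$.

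If $[\beta_{2n-1}]=0$, take $\beta_{2n-1}=0$, so $\alpha=\sum_{q=n}^{2n-2}\beta_q$. For each $q$ with $n\le q\le 2n-2$ the proof of Theorem~\ref{Main theorem} (Cases $1$ and $2$) shows that $\beta_q(J(A),J(A))\subseteq J(A)D(A)+D(A)J(A)$, so the same holds for the sum $\alpha$. Thus condition~(1) of Lemma~\ref{key lemma} is satisfied and $\Delta_{T_\alpha(A)}=\Delta_{T_0(A)}$.

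If $[\beta_{2n-1}]\neq0$, then $\beta_{2n-1}=k\sum_{i=1}^{s}\alpha_i$ with $k\neq0$. By Lemma~\ref{key lemma2}, $\Delta\subseteq\Delta_{T_\alpha(A)}\subseteq\Delta_{T_0(A)}$, and by \cite[Proposition~2.2]{Fernandez} the only arrows of $\Delta_{T_0(A)}$ outside $\Delta$ are $y_i\colon i\to i-n+1$ for $1\le i\le s$; so it is enough to show $N_{T_\alpha(A)}(i,\,i-n+1)=0$ for each $i$. As in the proof of Theorem~\ref{Main theorem}, $N_A(i,\,i-n+1)=0$ and $e_iD(A)e_{i-n+1}=K(x_{i-n+1}\cdots x_{i-1})^{*}$, so it suffices to place $(x_{i-n+1}\cdots x_{i-1})^{*}$ inside $e_i\bigl(J(A)D(A)+D(A)J(A)+\alpha(J(A),J(A))\bigr)e_{i-n+1}$. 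I would do this by evaluating $\alpha$ on the pair $x_{i-2n+1}\in J(A)$ and $x_{i-2n+2}\cdots x_{i-n}\in J(A)$ (of total length $n$): the $\beta_{2n-1}$-summand contributes exactly $k(x_{i-n+1}\cdots x_{i-1})^{*}$, whereas for $n+1\le q\le 2n-2$ the $\beta_q$-summand contributes the dual-path element $(x_{i-n+1}\cdots x_{i-2n+q})^{*}$, of length $q-n\le n-2$, and the $\beta_n$-summand contributes a $K$-linear combination of the $e_j^{*}$; all of these lie in $J(A)D(A)+D(A)J(A)$. Since $k\neq0$, it follows that $(x_{i-n+1}\cdots x_{i-1})^{*}$ lies in $J(A)D(A)+D(A)J(A)+\alpha(J(A),J(A))$, hence $N_{T_\alpha(A)}(i,\,i-n+1)=0$ and $\Delta_{T_\alpha(A)}=\Delta$.

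The only real work is the index bookkeeping in the last step: one must check that feeding a length-$1$ arrow and a length-$(n-1)$ path into the explicit representatives of the $\beta_q$ extracts the ``top'' dual-path element $(x_{i-n+1}\cdots x_{i-1})^{*}$ from $\beta_{2n-1}$ while producing only strictly shorter dual-path elements (already contained in $J(A)D(A)+D(A)J(A)$) from the summands with $q\le 2n-2$. Everything else is a direct appeal to Lemmas~\ref{key lemma2} and \ref{key lemma} together with computations already carried out in the proof of Theorem~\ref{Main theorem}.
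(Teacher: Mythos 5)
Your proposal is correct and follows essentially the same route as the paper: replace $\alpha$ by the explicit representatives $\gamma_q$ from the proof of Theorem~\ref{Main theorem}, invoke Lemma~\ref{key lemma} when $[\beta_{2n-1}]=0$, and rerun the Case~3 computation of $N_{T_\alpha(A)}(i,\,i-n+1)$ when $[\beta_{2n-1}]\neq0$. The only difference is that you spell out the step the paper dismisses with ``in a similar way to Case~3,'' namely that the summands with $q\le 2n-2$ contribute only elements already lying in $J(A)D(A)+D(A)J(A)$ when $\alpha$ is evaluated on the pair $(x_{i-2n+1},\,x_{i-2n+2}\cdots x_{i-n})$ — a worthwhile clarification, but not a different argument.
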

\begin{proof}
In the proof of Theorem \ref{Main theorem},
we consider the three cases.
Then, in {\it Case $1$} and {\it Case $2$},
$\Delta_{T_\alpha(A)}$ coincides with $\Delta_{T_0(A)}$,
and in {\it Case $3$}, $\Delta_{T_\alpha(A)}$ coincides with $\Delta$.

By the proof of Theorem \ref{Main theorem},
$\beta_q$ corresponds to a 2-cocycle of {\it Case} $1$,{\it Case} $2$ or {\it Case} $3$
for each $q \, (n \leq q \leq 2n-1$).
If $q = n$, then there exist $k_i \in K\,(1 \leq i \leq s)$ such that
\begin{align*}
	[\beta_n] 
    	&=\begin{cases}
        	\sum_{i=1}^{s-1} k_i [\alpha_i] = [\sum_{i=1}^{s-1} k_i \alpha_i] & \text{if ${\rm char}(K) \nmid (n/s)$,} \\
            \sum_{i=1}^{s} k_i [\alpha_i] = [\sum_{i=1}^{s} k_i \alpha_i] & \text{if ${\rm char}(K) | (n/s)$,}
        \end{cases} \\
\end{align*}
where $\alpha_i$ is the 2-cocycle in {\it Case $2$}.
We put $\gamma_n := \sum_i k_i \alpha_i$.
If $n+1 \leq q \leq 2n-1$, then there exists $k_q \in K$ such that
$
	[\beta_q] 
    	= k_q [\sum_{i=1}^{s} \alpha_i]
        = [k_q \sum_{i=1}^{s} \alpha_i],
$
where $\sum_{i=1}^{s} \alpha_i$ is the 2-cocycle in {\it Case $1$} or {\it Case $3$}.
We put $\gamma_q := k_q \sum_{i=1}^{s} \alpha_i$.
Then it is easy to see that
$$
	\Delta_{T_{\alpha}(A)} 
    	= \Delta_{T_{\sum_{q=n}^{2n-1}\beta_q}(A)}
        	= \Delta_{T_{\sum_{q=n}^{2n-1}\gamma_q}(A)}.
$$

If $[\beta_{2n-1}] = 0$, we have
$
	\Delta_{T_{\alpha}(A)} 
		= \Delta_{T_{\sum_{q=n}^{2n-2}\beta_q}(A)}
        	= \Delta_{T_{\sum_{q=n}^{2n-2}\gamma_q}(A)}$.
Since $\sum_{q=n}^{2n-2}\gamma_q$ satisfies the conditions of Lemma \ref{key lemma},
it follows that $\Delta_{T_{\alpha}(A)} = \Delta_{T_0(A)}$.

If $[\beta_{2n-1}] \neq 0$,
then in a similar way to {\it Case $3$} in the proof of Theorem \ref{Main theorem},
there is no arrow from $i$ to $i-n+1$ in $\Delta_{T_{\alpha}(A)}$
for each $i$ $(1 \leq i \leq s)$.
Hence, we have $\Delta_{T_{\alpha}(A)} = \Delta$.
\end{proof}
%
%
\begin{corollary}\label{corollary2}
	Suppose that $n \geq 2$ and $A=K\Delta/R_\Delta^n$.
	Let $\alpha : A \times A \rightarrow D(A)$
    be a $2$-cocycle.
    If $\Delta_{T_\alpha(A)} = \Delta$, then
    $T_\alpha(A)$ is isomorphic to $K\Delta/R_\Delta^{2n}$
    and $T_\alpha(A)$ is symmetric.
\end{corollary}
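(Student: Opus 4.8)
The plan is first to identify $T_\alpha(A)$ explicitly as a truncated cycle algebra and then to exhibit a symmetrizing form on it. Since cohomologous $2$-cocycles give isomorphic extension algebras, I may replace $\alpha$ by the canonical representative of $[\alpha]$ produced by the isomorphism $\Theta$ of Section~\ref{main theorem}; as observed after the definition of $\Theta$, this representative satisfies $\alpha(e_i,-)=\alpha(-,e_i)=0$ for every $i\in\Delta_0$. By the argument in the proof of Lemma~\ref{key lemma2}, $\{(e_1,0),\dots,(e_s,0)\}$ is then a complete set of primitive orthogonal idempotents of $T_\alpha(A)$, so $T_\alpha(A)$ is basic; moreover $T_\alpha(A)/J(T_\alpha(A))\cong A/J(A)\cong K^s$, so $T_\alpha(A)$ is elementary. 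By hypothesis its ordinary quiver is the cyclic quiver $\Delta$, in which each vertex is the source of exactly one arrow and the target of exactly one arrow, so $T_\alpha(A)$ is a connected basic Nakayama algebra with ordinary quiver $\Delta$, and hence $T_\alpha(A)\cong K\Delta/I'$ for an admissible ideal $I'$ of $K\Delta$.

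Next I would pin down $I'$. Hochschild extension algebras are self-injective, so $T_\alpha(A)$ is a self-injective Nakayama algebra, and the characterization recalled in the introduction forces $I'=R_\Delta^{L}$ for some $L\ge 2$. A dimension count finishes the identification: $\Delta$ has exactly $s$ paths of each length, so $\dim_K A=sn$ and $\dim_K D(A)=sn$, whence $\dim_K T_\alpha(A)=2sn$, while $\dim_K K\Delta/R_\Delta^{L}=sL$; thus $L=2n$ and $T_\alpha(A)\cong K\Delta/R_\Delta^{2n}$.

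It remains to show that $K\Delta/R_\Delta^{2n}$ is symmetric, and the crux is that the hypothesis $\Delta_{T_\alpha(A)}=\Delta$ secretly forces $s\mid 2n-1$. Writing $[\alpha]=\sum_{q=n}^{2n-1}[\beta_q]$ with $[\beta_q]\in\Theta(D(HH_{2,q}(A)))$ via $\Theta$, Corollary~\ref{corollary1} gives $\Delta_{T_\alpha(A)}=\Delta_{T_0(A)}$ when $[\beta_{2n-1}]=0$ and $\Delta_{T_\alpha(A)}=\Delta$ when $[\beta_{2n-1}]\ne 0$; but $\Delta_{T_0(A)}$ has $2s$ arrows by \cite[Proposition~2.2]{Fernandez}, so $\Delta\subsetneq\Delta_{T_0(A)}$, and therefore $[\beta_{2n-1}]\ne 0$. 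Hence $HH_{2,\,2n-1}(A)\ne 0$, and \eqref{2nd Hochschild of truncated} then yields $s\mid 2n-1$. Now let $\lambda\colon K\Delta/R_\Delta^{2n}\to K$ be the linear form vanishing on all paths of length $<2n-1$ and taking the value $1$ on every path of length $2n-1$; since $s\mid 2n-1$, each such maximal path is a cycle. The pairing $(u,v)\mapsto\lambda(uv)$ is nondegenerate, since any nonzero element of $K\Delta/R_\Delta^{2n}$ is a linear combination of paths and a component of maximal length among them can be completed on the right to a maximal path, so $\ker\lambda$ contains no nonzero one-sided ideal. For symmetry it suffices to check $\lambda(pq)=\lambda(qp)$ on paths $p,q$: $\lambda(pq)\ne 0$ precisely when $t(p)=s(q)$ and the lengths of $p$ and $q$ sum to $2n-1$, and then, as $2n-1\equiv 0\pmod s$, these two conditions also give $t(q)=s(p)$, so $qp$ is again a maximal path and $\lambda(qp)=1=\lambda(pq)$; the vanishing case is symmetric. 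Thus $\lambda$ is a symmetrizing form and $T_\alpha(A)\cong K\Delta/R_\Delta^{2n}$ is symmetric.

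I expect the main obstacle to be the step extracting $s\mid 2n-1$ from the purely combinatorial hypothesis on the ordinary quiver: without this divisibility $K\Delta/R_\Delta^{2n}$ is not even weakly symmetric, so the whole conclusion hinges on funnelling $\Delta_{T_\alpha(A)}=\Delta$ through Corollary~\ref{corollary1} and the precise shape of $HH_{2,\,2n-1}(A)$ in \eqref{2nd Hochschild of truncated}; once that is in hand, the verification that $\lambda$ is symmetric is a short calculation that uses exactly that divisibility.
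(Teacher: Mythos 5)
Your proof is correct, and its logical skeleton coincides with the paper's: identify $T_\alpha(A)$ as a truncated cycle algebra, use $\dim_K T_\alpha(A)=2\dim_K A$ to get the truncation length $2n$, and extract $s\mid 2n-1$ from the hypothesis $\Delta_{T_\alpha(A)}=\Delta$ via the decomposition $[\alpha]=\sum_q[\beta_q]$ and the vanishing pattern of $HH_{2,q}(A)$ in \eqref{2nd Hochschild of truncated}. Where you diverge is in what you do with that information at the two ends of the argument. For the identification $T_\alpha(A)\cong K\Delta/R_\Delta^{m}$ the paper simply invokes \cite[Section 3.1]{Erdmann Holm} for self-injective Nakayama algebras, whereas you first verify that $T_\alpha(A)$ is basic and elementary (lifting the idempotents, using that a cohomologous representative in the image of $\Theta$ kills the $e_i$'s) and then read off the uniseriality from the shape of $\Delta$; this is slightly more work but makes the reduction self-contained. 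More substantially, for symmetry the paper again cites \cite[Section 4.1]{Erdmann Holm} once $2n\equiv 1\pmod s$ is known, while you construct the symmetrizing form $\lambda$ explicitly (value $1$ on the $s$ maximal paths, which are cycles precisely because $s\mid 2n-1$) and check nondegeneracy and the trace property by hand. Your nondegeneracy argument is sound because in the cyclic quiver a path is determined by its length together with its target, so completing a maximal-length component on the right isolates its coefficient. The net effect is a proof that buys independence from the Erdmann--Holm classification at the cost of a page of elementary verification; you also correctly identify the divisibility $s\mid 2n-1$ as the crux, which is exactly the point the paper's proof turns on.
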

\begin{proof}
Since $T_\alpha(A)$ is the self-injective Nakayama algebra,
by \cite[Section 3.1]{Erdmann Holm},
$T_\alpha(A)$ is isomorphic to $K\Delta/R_\Delta^m$
for some $m \geq 2$.
Since 
${\rm dim}_K\, T_\alpha(A) = 2\,{\rm dim}_K\, A$,
we have $m=2n$.
In the same way to Corollary \ref{corollary1},
let $[\alpha] = \sum_{q=n}^{2n-1} [\beta_q]$.
Since $\Delta_{T_\alpha(A)} = \Delta$,
we have $[\beta_{2n-1}] \neq 0$.
By (\ref{2nd Hochschild of truncated}) and the proof of Theorem \ref{Main theorem},
$2n-1$ is divided by $s$, that is, $2n \equiv 1\, ({\rm mod}\,s)$.
By \cite[Section 4.1]{Erdmann Holm},
$T_\alpha(A)$ is symmetric.
\end{proof}
%
%
\section{Examples}
Let $K$ be a field.
We consider the case $s=3$, that is,
$\Delta$ is the following quiver:
$$
{\unitlength 0.1in
\begin{picture}( 17.9100, 14.1000)( 44.2000,-25.5000)
%
{\color[named]{Black}{%
\special{pn 8}%
\special{pa 5466 1356}%
\special{pa 6212 2398}%
\special{fp}%
\special{sh 1}%
\special{pa 6212 2398}%
\special{pa 6188 2332}%
\special{pa 6180 2356}%
\special{pa 6156 2356}%
\special{pa 6212 2398}%
\special{fp}%
}}%
%
{\color[named]{Black}{%
\special{pn 8}%
\special{pa 6018 2526}%
\special{pa 4736 2524}%
\special{fp}%
\special{sh 1}%
\special{pa 4736 2524}%
\special{pa 4804 2544}%
\special{pa 4790 2524}%
\special{pa 4804 2504}%
\special{pa 4736 2524}%
\special{fp}%
}}%
%
{\color[named]{Black}{%
\special{pn 8}%
\special{pa 4520 2364}%
\special{pa 5306 1352}%
\special{fp}%
\special{sh 1}%
\special{pa 5306 1352}%
\special{pa 5250 1392}%
\special{pa 5274 1394}%
\special{pa 5282 1418}%
\special{pa 5306 1352}%
\special{fp}%
}}%
\put(53.4000,-13.0000){\makebox(0,0)[lb]{$1$}}%
\put(61.3000,-26.1000){\makebox(0,0)[lb]{$2$}}%
\put(44.2000,-25.9000){\makebox(0,0)[lb]{$3$}}%
\put(59.6000,-18.0000){\makebox(0,0)[lb]{$x_1$}}%
\put(52.6000,-24.9000){\makebox(0,0)[lb]{$x_2$}}%
\put(47.0000,-17.9000){\makebox(0,0)[lb]{$x_3$}}%
\end{picture}}%

$$
and $A= K\Delta/R_{\Delta}^n$.
By (\ref{2nd Hochschild of truncated}),
$HH_{2,\,q}(A) \neq 0$
if and only if
$n \leq q \leq 2n-1$ and $3|q$.

%
%
First, we consider the case $n=4$.
Then $HH_{2,q} (A) \neq0$
if and only if $q=6$.
By (\ref{case1}), we have
$$
	D(HH_{2,6}(A)) 
    	= \langle
        	\sum_{i=1}^{3} (x_{i+4}x_{i+5} \otimes_{K\Delta_0^e} x_{i}x_{i+1}x_{i+2}x_{i+3})^*
        \rangle.
$$
For $a,\,b \in \Delta_+$,
the $2$-cocycle
$\alpha : A \times A \rightarrow D(A)$
corresponding to
$\sum_{i=1}^{3} (x_{i+4}x_{i+5} \otimes_{K\Delta_0^e} x_{i}x_{i+1}x_{i+2}x_{i+3})^*$
is given by
\begin{align*}
	\alpha(a,\,b) 
    	= \begin{cases}
    		(x_{i+4}x_{i+5})^*  & \text{if $ab = x_{i}x_{i+1}x_{i+2}x_{i+3}$}, \\
            x_{i+5}^*           & \text{if $ab = x_{i}x_{i+1}x_{i+2}x_{i+3}x_{i+4}$}, \\
            e_{i+6}^*             & \text{if $ab = x_{i}x_{i+1}x_{i+2}x_{i+3}x_{i+4}x_{i+5}$}, \\
            0 & \text{otherwise}.
    	\end{cases}
\end{align*}
By \cite[Proposition 2.2]{Fernandez}, $\Delta_{T_0(A)}$ is the following quiver:
\begin{center}
{\unitlength 0.1in
\begin{picture}( 22.7600, 19.6600)( 35.7000,-22.3000)
%
{\color[named]{Black}{%
\special{pn 8}%
\special{pa 4800 970}%
\special{pa 5292 1656}%
\special{fp}%
\special{sh 1}%
\special{pa 5292 1656}%
\special{pa 5270 1590}%
\special{pa 5262 1614}%
\special{pa 5238 1614}%
\special{pa 5292 1656}%
\special{fp}%
}}%
%
{\color[named]{Black}{%
\special{pn 8}%
\special{pa 5164 1742}%
\special{pa 4318 1740}%
\special{fp}%
\special{sh 1}%
\special{pa 4318 1740}%
\special{pa 4386 1760}%
\special{pa 4372 1740}%
\special{pa 4386 1720}%
\special{pa 4318 1740}%
\special{fp}%
}}%
%
{\color[named]{Black}{%
\special{pn 8}%
\special{pa 4176 1634}%
\special{pa 4694 968}%
\special{fp}%
\special{sh 1}%
\special{pa 4694 968}%
\special{pa 4638 1008}%
\special{pa 4662 1010}%
\special{pa 4670 1032}%
\special{pa 4694 968}%
\special{fp}%
}}%
\put(47.1700,-9.3200){\makebox(0,0)[lb]{$1$}}%
\put(52.3800,-17.9000){\makebox(0,0)[lb]{$2$}}%
\put(41.2200,-17.8300){\makebox(0,0)[lb]{$3$}}%
\put(51.1600,-12.6200){\makebox(0,0)[lb]{$x_1$}}%
\put(46.6400,-18.8200){\makebox(0,0)[lb]{$x_2$}}%
\put(42.9400,-12.5600){\makebox(0,0)[lb]{$x_3$}}%
%
{\color[named]{Black}{%
\special{pn 8}%
\special{ar 4770 560 296 296  1.9013767  1.1863478}%
}}%
%
{\color[named]{Black}{%
\special{pn 8}%
\special{pa 4890 832}%
\special{pa 4882 836}%
\special{fp}%
\special{sh 1}%
\special{pa 4882 836}%
\special{pa 4952 834}%
\special{pa 4932 818}%
\special{pa 4938 796}%
\special{pa 4882 836}%
\special{fp}%
}}%
%
{\color[named]{Black}{%
\special{pn 8}%
\special{ar 3886 1934 296 296  6.0288849  5.3170404}%
}}%
%
{\color[named]{Black}{%
\special{pn 8}%
\special{pa 4048 1684}%
\special{pa 4056 1690}%
\special{fp}%
\special{sh 1}%
\special{pa 4056 1690}%
\special{pa 4014 1634}%
\special{pa 4012 1658}%
\special{pa 3990 1666}%
\special{pa 4056 1690}%
\special{fp}%
}}%
%
{\color[named]{Black}{%
\special{pn 8}%
\special{ar 5550 1920 296 296  4.1077376  3.3958931}%
}}%
%
{\color[named]{Black}{%
\special{pn 8}%
\special{pa 5260 1860}%
\special{pa 5264 1846}%
\special{fp}%
\special{sh 1}%
\special{pa 5264 1846}%
\special{pa 5226 1906}%
\special{pa 5250 1898}%
\special{pa 5266 1916}%
\special{pa 5264 1846}%
\special{fp}%
}}%
\put(57.0300,-16.0600){\makebox(0,0)[lb]{$x'_2$}}%
\put(51.1200,-5.5500){\makebox(0,0)[lb]{$x'_1$}}%
\put(35.7000,-15.7100){\makebox(0,0)[lb]{$x'_3$}}%
\end{picture}}%

\end{center}
Then we know that $T_{\alpha}(A)$ is isomorphic to
$K\Delta_{T_0(A)}/I$,
where
\begin{align*}
	I 
    	= \langle
            x'_i x_i - x_ix'_{i+1},\,
            x_i x_{i+1} x_{i+2} x_{i+3} - x'_i x_i,\,
            (x'_i)^2
            \,|\, i=1,\,2,\,3
		\rangle.
\end{align*}
On the other hand,
$T_0(A)$ is isomorphic to
$K\Delta_{T_0(A)}/I_0$,
where
\begin{align*}
	I_0 
    	= \langle
			x'_i x_i - x_ix'_{i+1},\,
            x_i x_{i+1} x_{i+2} x_{i+3},\,
            (x'_i)^2 \,
            | \, i=1,\,2,\,3
		\rangle.
\end{align*}

%
%
Second, we consider the case $n=3$.
Then $HH_{2,q} (A) \neq0$
if and only if $q=3$.
By (\ref{case2}), we have
$$
	D(HH_{2,3}(A)) = \langle (e_1 \otimes_{K\Delta_0^e} x_1x_2x_3)^*,\, (e_2 \otimes_{K\Delta_0^e} x_2x_3x_1)^* \rangle.
$$
For $a,\,b \in \Delta_+$,
the $2$-cocycle
$\beta_1 : A \times A \rightarrow D(A)$
corresponding to $(e_1 \otimes_{K\Delta_0^e} x_1x_2x_3)^*$ is given by
$$
	\beta_1(a,\,b) 
    	= \begin{cases}
    		e_1^*  & \text{if $ab = x_1x_2x_3$}, \\
            0 & \text{otherwise},
    	\end{cases}
$$
and $\beta_2 : A \times A \rightarrow D(A)$
corresponding to $(e_2 \otimes_{K\Delta_0^e} x_2x_3x_1)^*$ is given by
$$
	\beta_2(a,\,b)
    	= \begin{cases}
    		e_2^*  & \text{if $ab = x_2x_3x_1$}, \\
            0 & \text{otherwise}.
    	\end{cases}
$$
For any $2$-cocycle $\beta := k_1\beta_1 + k_2\beta_2 \,(k_1,\,k_2 \in K)$,
by Theorem \ref{Main theorem}, we have $\Delta_{T_{\beta}(A)} = \Delta_{T_0(A)}$.
By \cite[Proposition 2.2]{Fernandez}, $\Delta_{T_0(A)}$ is the following quiver:
\begin{align*}
{\unitlength 0.1in
\begin{picture}( 19.5100, 15.3000)( 43.9000,-26.7000)
%
{\color[named]{Black}{%
\special{pn 8}%
\special{pa 5596 1286}%
\special{pa 6342 2328}%
\special{fp}%
\special{sh 1}%
\special{pa 6342 2328}%
\special{pa 6318 2262}%
\special{pa 6310 2286}%
\special{pa 6286 2286}%
\special{pa 6342 2328}%
\special{fp}%
}}%
%
{\color[named]{Black}{%
\special{pn 8}%
\special{pa 6018 2646}%
\special{pa 4736 2644}%
\special{fp}%
\special{sh 1}%
\special{pa 4736 2644}%
\special{pa 4804 2664}%
\special{pa 4790 2644}%
\special{pa 4804 2624}%
\special{pa 4736 2644}%
\special{fp}%
}}%
%
{\color[named]{Black}{%
\special{pn 8}%
\special{pa 4450 2304}%
\special{pa 5236 1292}%
\special{fp}%
\special{sh 1}%
\special{pa 5236 1292}%
\special{pa 5180 1332}%
\special{pa 5204 1334}%
\special{pa 5212 1358}%
\special{pa 5236 1292}%
\special{fp}%
}}%
\put(53.6000,-13.0000){\makebox(0,0)[lb]{$1$}}%
\put(61.7000,-26.5000){\makebox(0,0)[lb]{$2$}}%
\put(43.9000,-26.4000){\makebox(0,0)[lb]{$3$}}%
\put(60.4000,-17.1000){\makebox(0,0)[lb]{$x_1$}}%
\put(53.2000,-28.2000){\makebox(0,0)[lb]{$x_2$}}%
\put(45.6000,-17.2000){\makebox(0,0)[lb]{$x_3$}}%
%
{\color[named]{Black}{%
\special{pn 8}%
\special{pa 6018 2496}%
\special{pa 4736 2494}%
\special{fp}%
\special{sh 1}%
\special{pa 4736 2494}%
\special{pa 4804 2514}%
\special{pa 4790 2494}%
\special{pa 4804 2474}%
\special{pa 4736 2494}%
\special{fp}%
}}%
%
{\color[named]{Black}{%
\special{pn 8}%
\special{pa 4560 2424}%
\special{pa 5346 1412}%
\special{fp}%
\special{sh 1}%
\special{pa 5346 1412}%
\special{pa 5290 1452}%
\special{pa 5314 1454}%
\special{pa 5322 1478}%
\special{pa 5346 1412}%
\special{fp}%
}}%
%
{\color[named]{Black}{%
\special{pn 8}%
\special{pa 5486 1386}%
\special{pa 6232 2428}%
\special{fp}%
\special{sh 1}%
\special{pa 6232 2428}%
\special{pa 6208 2362}%
\special{pa 6200 2386}%
\special{pa 6176 2386}%
\special{pa 6232 2428}%
\special{fp}%
}}%
\put(53.3000,-24.7000){\makebox(0,0)[lb]{$x_2'$}}%
\put(50.8000,-20.1000){\makebox(0,0)[lb]{$x_3'$}}%
\put(55.9000,-20.2000){\makebox(0,0)[lb]{$x'_1$}}%
\end{picture}}%

\end{align*}
~\\
Then we know that $T_{\beta}(A)$ is isomorphic to
$K\Delta_{T_0(A)}/I'$,
where
\begin{align*}
	I' 
    	= \langle
				x_i x'_{i+1} -& x'_i x_{i+1}, \,
                x'_i x'_{i+1}, \\
                &x_1 x_2 x_3 - k_1 x_1 x_2 x'_3 ,\,
                x_2 x_3 x_1 - k_2 x_2 x_3 x'_1 ,\,
                x_3 x_1 x_2 
                \, | \, i=1,\,2,\,3
		\rangle.
\end{align*}
On the other hand,
$T_0(A)$ is isomorphic to
$K\Delta_{T_0(A)}/I'_0$,
where 
\begin{align*}
	I'_0 
    	= \langle
				x_i x'_{i+1} - x'_i x_{i+1}, \,
                x'_i x'_{i+1},\,
                x_i x_{i+1} x_{i+2}
                \, | \, i=1,\,2,\,3
		\rangle.
\end{align*}
Moreover,
it is easy to see that $T_{\beta_1}(A)$ and $T_{\beta_2}(A)$
are isomorphic as $K$-algebras.
However, they are not equivalent as Hochschild extensions.

%
%
And finally, we consider the case $n=2$.
Then $HH_{2,q}(A) \neq 0$ if and only if $q=n+1=3$.
By (\ref{case3}), we have
$$
		D(HH_{2,3}(A)) = \langle (x_3 \otimes_{K\Delta_0^e} x_1x_2)^*+(x_1 \otimes_{K\Delta_0^e} x_2x_3)^*+(x_2 \otimes_{K\Delta_0^e} x_3x_1)^* \rangle.
$$
The $2$-cocycle $\gamma : A \times A \rightarrow D(A)$
corresponding to $(x_3 \otimes_{K\Delta_0^e} x_1x_2)^*+(x_1 \otimes_{K\Delta_0^e} x_2x_3)^*+(x_2 \otimes_{K\Delta_0^e} x_3x_1)^*$ is given by
$$
	\gamma (a,\,b) 
    	= \begin{cases}
        	x_i^* & \text{if $ab = x_{i+1}x_{i+2}$}, \\
            0     & \text{otherwise},
        \end{cases}
$$
for $a,\,b \in \Delta_+$.
Then, we have
$\Delta_{T_{\gamma}(A)} = \Delta$.
It is easy to see that $T_\gamma(A)$
is isomorphic to $K\Delta/R_{\Delta}^4$.
By Corollary \ref{corollary2},
$T_\gamma(A)$ is symmetric.
We remark that $T_\gamma(A)$ satisfies the condition of \cite[Theorem 1]{yamagata}.
On the other hand,
by \cite[Proposition 2.2]{Fernandez},
$\Delta_{T_0(A)}$ is the following quiver:
\begin{align*}
{\unitlength 0.1in
\begin{picture}( 19.8200, 15.4000)( 43.9000,-26.8000)
%
{\color[named]{Black}{%
\special{pn 8}%
\special{pa 5626 1290}%
\special{pa 6372 2332}%
\special{fp}%
\special{sh 1}%
\special{pa 6372 2332}%
\special{pa 6350 2266}%
\special{pa 6342 2290}%
\special{pa 6318 2290}%
\special{pa 6372 2332}%
\special{fp}%
}}%
%
{\color[named]{Black}{%
\special{pn 8}%
\special{pa 6018 2646}%
\special{pa 4736 2644}%
\special{fp}%
\special{sh 1}%
\special{pa 4736 2644}%
\special{pa 4804 2664}%
\special{pa 4790 2644}%
\special{pa 4804 2624}%
\special{pa 4736 2644}%
\special{fp}%
}}%
%
{\color[named]{Black}{%
\special{pn 8}%
\special{pa 4450 2304}%
\special{pa 5236 1292}%
\special{fp}%
\special{sh 1}%
\special{pa 5236 1292}%
\special{pa 5180 1332}%
\special{pa 5204 1334}%
\special{pa 5212 1358}%
\special{pa 5236 1292}%
\special{fp}%
}}%
\put(53.6000,-13.0000){\makebox(0,0)[lb]{$1$}}%
\put(61.8000,-26.3000){\makebox(0,0)[lb]{$2$}}%
\put(43.9000,-26.4000){\makebox(0,0)[lb]{$3$}}%
\put(60.4000,-17.1000){\makebox(0,0)[lb]{$x_1$}}%
\put(53.2000,-28.4000){\makebox(0,0)[lb]{$x_2$}}%
\put(45.4000,-17.1000){\makebox(0,0)[lb]{$x_3$}}%
%
{\color[named]{Black}{%
\special{pn 8}%
\special{pa 4736 2504}%
\special{pa 6016 2508}%
\special{fp}%
\special{sh 1}%
\special{pa 6016 2508}%
\special{pa 5950 2488}%
\special{pa 5964 2508}%
\special{pa 5950 2528}%
\special{pa 6016 2508}%
\special{fp}%
}}%
%
{\color[named]{Black}{%
\special{pn 8}%
\special{pa 5360 1404}%
\special{pa 4572 2416}%
\special{fp}%
\special{sh 1}%
\special{pa 4572 2416}%
\special{pa 4630 2376}%
\special{pa 4606 2374}%
\special{pa 4598 2352}%
\special{pa 4572 2416}%
\special{fp}%
}}%
%
{\color[named]{Black}{%
\special{pn 8}%
\special{pa 6254 2424}%
\special{pa 5510 1382}%
\special{fp}%
\special{sh 1}%
\special{pa 5510 1382}%
\special{pa 5532 1448}%
\special{pa 5542 1424}%
\special{pa 5566 1424}%
\special{pa 5510 1382}%
\special{fp}%
}}%
\put(53.2000,-24.5000){\makebox(0,0)[lb]{$x_2'$}}%
\put(50.8000,-20.1000){\makebox(0,0)[lb]{$x_3'$}}%
\put(55.9000,-20.2000){\makebox(0,0)[lb]{$x_1'$}}%
\end{picture}}%

\end{align*}
~\\
and
$T_0(A)$ is isomorphic to $K\Delta_{T_0(A)}/I_0''$,
where 
$$
 I''_0 = \langle
 			x_i x'_i - x'_{i+2} x_{i+2},\,
            x_i x_{i+1},\,
            x'_i x'_{i-1}
            \, | \, i=1,\,2,\,3
        \rangle.
$$

\section*{Acknowledgment}
The authors thank Professor Takahiko Furuya for many valuable comments
and suggestions.

%



\begin{thebibliography}{30}
 \bibitem{ACT} Ames, G., Cagliero L., Tirao, P. (2009).
   Comparison morphisms and the Hochschild cohomology ring of truncated quiver
   algebras. {\it J. Algebra} 322(5):1466--1497.
 \bibitem{ASS} Assem, I., Simson D., Skowro$\acute{{\rm n}}$ski, A. (2006).
   {\it Elements of the Representation Theory of Associative Algebras I}.
   London Math. Soc. Student Texts, vol. 65. Cambridge:
   Cambridge Univ. Press.
 \bibitem{Bardzell} Bardzell, M., Locateli, A., Marcos, E. (2000).
 	 On the Hochschild cohomology of truncated cycle algebras.
   {\it Comm. Algebra} 28(3):1615--1639.
 \bibitem{Cibils} Cibils, C. (1989).
 	Cohomology of incidence algebras and simplicial complexes.
    {\it J. Pure Appl. Algebra} 56:221--232.
 \bibitem{Erdmann Holm}Erdmann, K., Holm, T.(1999).
 	Twisted bimodules and Hochschild cohomology for self-injective
    algebras of class $A_n$.
 	{\it Forum Math.} 11:177--201.
 \bibitem{Fernandez}
 	Fern$\acute{{\rm a}}$ndez, E., Platzeck, M. (2002).
   Presentations of trivial extensions of finite dimensional algebras and a theorem
   of Sheila Brenner. {\it J. Algebra} 249:326--344.
 \bibitem{Hochschild}Hochschild, G. (1945).
 	On the cohomology groups of an associative algebra.
    {\it Ann. Math. $(2)$} 46:58--67.
 \bibitem{Itagaki}Itagaki, T., Sanada, K. (2014).
 	The dimension formula of the cyclic homology of truncated quiver algebras over
    a field of positive characteristic. {\it J. Algebra} 404:200--221.
 \bibitem{Ohnuki}Ohnuki, Y. (2002).
 	Stable equivalence induced by a socle equivalence.
    {\it Osaka J.} 39(1):259--266.
 \bibitem{yamagata}Ohnuki, Y., Takeda, K., Yamagata, K. (1999).
 	Symmetric Hochschild extension algebras. {\it Colloq. Math.} 80:155--174.
 \bibitem{skoldberg}
 	Sk\"oldberg, E. (1999).
    The Hochschild homology of truncated and quadratic monomial algebra.
    {\it J. Lond. Math. Soc. $(2)$} 59:76--86.
 \bibitem{Fro}Skowro$\acute{{\rm n}}$ski, A., Yamagata, K. (2011).
 	{\it Frobenius Algebras I Basic Representation Theory.}
    EMS Textbooks Math. Z\"{u}rich: Eur. Math. Soc.  
 \bibitem{Volcic}Vol$\check{{\rm c}}$i$\check{{\rm c}}$, J. (2015).
 	Cyclic homology of truncated quiver algebras.
    {\it J. Pure Appl. Algebra} 219(1):33--46.
 \bibitem{yamagata 1988}Yamagata, K. (1988).
 	Representations of non-splittable extension algebras.
    {\it J. Algebra} 115:32--45.
 \bibitem{Handbook of algebra}
 	Yamagata, K. (1996).
 	Frobenius Algebras.
    {\it Handbook of Algebra} 1:841--887. Elsevier.
\end{thebibliography}
\end{document}